\def\R{{\mathbb R}}
\def\Hom{\mathrm{Hom}}
\def\Diff{\mathrm{Diff}}
\def\St{\mathbf{St}}
\def\Grpds{\mathbf{Grpds}}
\def\MM{{\mathcal M}}
\def\i{{\mathfrak i}}
\def\d{{\overline{d}}}
\def\i{{\overline{\iota}}}
\theoremstyle{plain}
 \newtheorem{teo}{Theorem}[section]
 \newtheorem{prop}[teo]{Proposition}
\theoremstyle{definition}
 \newtheorem{exm}[teo]{Example}
 \newtheorem{dfn}[teo]{Definition}
\theoremstyle{remark}
 \newtheorem{rem}[teo]{Remark}
\numberwithin{equation}{section}
\begin{document}

\title[Equivariant cohomology for differentiable stacks]
{Equivariant cohomology for differentiable stacks}

\author[Luis Alejandro Barbosa-Torres]
{Luis Alejandro Barbosa-Torres}
\address{School of Mathematics and Actuarial Science\\
Pure Mathematics Group\\
University of Leicester\\
University Road, Leicester LE1 7RH, England, UK}
\email{labarbosat@gmail.com}

\author[Frank Neumann]
{Frank Neumann}
\address{School of Mathematics and Actuarial Science\\
Pure Mathematics Group\\
University of Leicester\\
University Road, Leicester LE1 7RH, England, UK}
\email{fn8@le.ac.uk}

%    General inf
\subjclass{18G40, 22A22, 55N91, 57T10, 58A12, 58H05}

%\date{\today}

\keywords{Differentiable stacks, Lie groupoids, de Rham cohomology, equivariant cohomology, spectral sequences}

\begin{abstract}
{We construct and analyse models of equivariant cohomology for differentiable stacks with Lie group actions extending classical results for smooth manifolds due to Borel, Cartan and Getzler. We also derive various spectral sequences for the equivariant cohomology of a differentiable stack generalising among others Bott's spectral sequence which converges to the cohomology of the classifying space of a Lie group.} 
\end{abstract}

\maketitle

\section*{Introduction}

\noindent An important principle in geometry and physics is to exploit symmetry whenever possible. A common manifestation of such symmetry 
is for example given by an action of a Lie group $G$ on a smooth manifold $X$. Equivariant cohomology is a way of exploiting this symmetry and provides an important algebraic invariant to study Lie group actions on smooth manifolds. A first approach when looking for a notion of cohomology in this framework is to use the singular cohomology of the quotient space $X/G$, however even though singular cohomology is well defined, the quotient and its singular cohomology may loose a lot of geometric information. In order to overcome this obstruction, one can construct alternative and better models for equivariant cohomology using a weaker, but more adequate notion of a quotient for Lie group actions on smooth manifolds by employing homotopy theory or using appropriate equivariant versions of the de Rham complex of differential forms. There are basically three important models for equivariant cophomology. Firstly, the Borel model provides a good topological model for equivariant cohomology by considering the homotopy quotient space instead, also called Borel construction, $EG\times_G X$ of $X$ (see \cite{borel1960coh}, \cite{guillemin2013supersymmetry}). Secondly, the Cartan model employs the notion of equivariant differential forms on $X$, but restricted to compact Lie group actions (see \cite{cartan1951coh}). Both models provide the same equivariant cohomology groups (see \cite{berline2003heat}). And finally, the Getzler model (see \cite{getzler1994equivariant}), which also employs an appropriate complex of equivariant differential forms, provides a generalisation and an alternative to the Cartan model in the more general case of arbitrary, not necessarily compact Lie group actions on smooth manifolds. This model is also essential for many important applications in global analysis, differential geometry and mathematical physics.

In this article we provide an alternative and extension to the classical constructions of equivariant cohomology using the more general framework of Lie group actions on differentiable stacks. This approach relies on the construction of a good quotient stack or stacky quotient $\MM/G$ for a general action of a Lie group $G$ on a given differentiable stack $\MM$. In the special situation of an action of a Lie group $G$ on a smooth manifold $X$ we recover the quotient stack $[X/G]$. We will show that this stacky quotient $\MM/G$ is again a differentiable stack and has a homotopy type given by a good homotopy quotient constructed via the fat geometric realisation of the nerve of the Lie groupoid associated to the differentiable stack $\MM/G$. In more detail, we can understand this homotopy type of $\MM/G$ as being constructed from the bisimplicial smooth manifold given by $G^\bullet \times X_\bullet$, where $G^p$ is the $p$-fold cartesian product of the Lie group $G$ and $X_n$ describes the $n$-th component of the simplicial smooth manifold given by the nerve associated to the Lie groupoid constructed from a given $G$-atlas $X\rightarrow \MM$ of the original differentiable $G$-stack $\MM$. These constructions are in fact independent of a particular choice of an atlas and are therefore stacky by nature. In the special situation of an action of a Lie group $G$ on a smooth manifold $X$ we obtain the classical quotient stack $[X/G]$ and its homotopy type simply recovers the homotopy quotient $EG\times_G X$. Consequently, we obtain a Borel model for equivariant cohomology of general differentiable $G$-stacks which extends and generalises the classical Borel model for smooth $G$-manifolds.  Furthermore using simplicial techniques, we extend the Cartan and Getzler models for equivariant cohomology based on particular complexes of equivariant differential forms for differentiable stacks with compact or non-compact Lie group actions. For example, the stacky Cartan model is induced from constructions of adequate complexes of differential forms for simplicial smooth manifolds based on Meinreken's work (see \cite{meinrenken2005witten}, \cite{suzuki2015simplicial}, \cite{suzuki2016equivariant}) while for the stacky Getzler model we also use simplicial techniques from \cite{kubel2015equivariant} applied to the simplicial smooth manifold build from the nerve of the associated Lie groupoid of the given differentiable $G$-stack. Group actions on stacks were first studied systematically by Romagny \cite{romagny2003group} in the context of algebraic geometry where general actions of flat group schemes on algebraic stacks were considered. More recently general actions of topological groups on topological stacks were also studied by Ginot-Noohi \cite{ginot2012group} in their general approach to equivariant string topology \cite{behrend2007string}. Since differentiable stacks generalise orbifolds (see \cite{lerman2010orbst}, \cite{satake1956generalization}, \cite{thurston1979geometry}), the models of equivariant cohomology for Lie group actions on differentiable stacks provided here also establish good models for equivariant orbifold cohomology which we aim to explore in future work. Equivariant Chen-Ruan orbifold cohomology for example has many important applications in symplectic geometry, in particular when considering Hamiltonian torus actions on orbifolds (compare for example \cite{holmmatsumura2012orb}, \cite{lermanmakin2012orb}).

This article is structured as follows. In the first section we recall the definition and basic properties of general Lie group actions on differentiable stacks. We start by defining the notion of an action of a Lie group $G$ on a differentiable stack $\MM$ and then describe the associated $2$-category  $G$--$\St$ of $G$-stacks. This is very much in the flavour of \cite{romagny2003group} and \cite{ginot2012group}, but in the context of smooth manifolds and Lie groups. The second section features the construction of the quotient stack $\MM/G$ for a differentiable stack $\MM$ with an action of a Lie group $G$. We then introduce the notion of a differentiable $G$-stack $\MM$ using an appropriate version of a $G$-atlas given as a smooth manifold with $G$-action and relate it to the simplicial $G$-manifold constructed from the associated Lie groupoid of $\MM$. Finally we show that the quotient stack $\MM/G$ is in fact a differentiable stack and we explicitly describe its homotopy type given by the fat geometric realisation of the simplicial nerve of the Lie groupoid associated to $\MM/G$. In the third section we recall and discuss several cohomology theories for differentiable stacks, namely de Rham cohomology, sheaf cohomology and hypercohomology, following in parts the expositions in \cite{behrend2004cohomology}, \cite{behrend2005deRham} and \cite{heinloth2005notes}. In the fourth section we introduce the concepts of equivariant cohomology for differentiable $G$-stacks and derive and describe the Borel, Cartan and Getzler models in this general context. We analyse several fundamental properties of equivariant cohomology, in particular concerning the effect of restricting the acting Lie group. Finally in the fifth and last section we derive several spectral sequences that all converge to the equivariant cohomology of a differentiable $G$-stack. They relate the cohomology of the simplicial nerve of the Lie groupoid associated to a differentiable $G$-stack $\MM$ with the equivariant cohomology of $\MM$. We will then analyse these in particular situations and discuss their homological properties. As special cases we obtain generalisations of spectral sequences previously constructed for equivariant cohomology of smooth $G$-manifolds, including the celebrated Bott spectral sequence converging to the cohomology of the classifying space of a Lie group (see \cite{felder2008gerbe}, \cite{stasheff1978continuous} and \cite{bott1973chern}).

\section{Group actions on differentiable stacks} \label{GroupActStack}

\noindent The general notion of a group action on a stack was first developed and applied by Rogmany \cite{romagny2003group} in the context of group scheme actions on algebraic stacks. More recently, Ginot and Noohi \cite{ginot2012group} studied topological group actions on topological stacks in their approach to equivariant string topology. In this section we will recall and analyse the main definitions and constructions within the framework of differentiable stacks.
For us here, a {\it stack} will always mean a pseudo-functor $\MM:\Diff^{op} \rightarrow \Grpds$, where $\Diff$ is the category of smooth manifolds and smooth maps endowed with the big site of local diffeomorphisms, and where $\Grpds$ is the category of discrete groupoids (see also \cite{heinloth2005notes} or \cite{Barbosaphdthesis, neumann2009algebraic}). We can interpret any smooth manifold $X$ in $\Diff$  also as the stack given by $\underline{X}=\text{Hom}(- , X)$, which is the stack represented by $X$. A {\it differentiable stack} is a stack $\MM$, together with a smooth manifold $X$ and a morphism of stacks $X\xrightarrow{p} \MM$, called an {\it atlas}, which is representable and has local sections (see \cite{heinloth2005notes}). For more details about the theory of differentiable stacks and their basic properties we will refer to \cite{Barbosaphdthesis, heinloth2005notes} and \cite{behrend2011differentiable}.

\begin{dfn} 
Let $G$ be a Lie group and $\mathcal{M}$ a differentiable stack with atlas $X\rightarrow \mathcal{M}$. A $G$\textit{-action on} $\mathcal{M}$ is a morphism of stacks $\mu: G\times \mathcal{M} \rightarrow \mathcal{M}$ together with $2$-morphisms $\alpha$ and $\beta$, such that for each $T \in \Diff$, the following diagrams  

\begin{center}
 \begin{tikzcd}[column sep=large]
G\times G\times \mathcal{M} (T) \arrow[r, ""{below, name=W, inner sep=1pt}, "m\times id_{\mathcal{M}(T)}"{above}]{} \arrow[d,""{right,name=X,inner sep=1pt},"id_G \times\mu_T"{left}]
& G\times \mathcal{M} (T) \arrow{d}{\mu_T} \\
G\times \mathcal{M} (T) \arrow{r}{\mu_T} \arrow[ur,Rightarrow,"\alpha"]%\arrow[Rightarrow, bend right,  from=X, to=W, " "]
& \mathcal{M} (T)
\end{tikzcd}
\end{center}
and
%m\times id_{\mathcal{M}(T)}
%id_G \times\mu_T

\begin{center}
\begin{tikzcd}[column sep=large]

G\times \mathcal{M} (T)
\arrow[r,"\mu_T"{above}]
&\mathcal{M} (T) \\
\mathcal{M}(T)\arrow[u, "e \times id_{\mathcal{M}(T)}"{left}]{} \arrow[ur," id_{\mathcal{M}(T)}"{swap}]{}  &\arrow[ul, Rightarrow,shorten <= 5em, shorten >= 0.02em, pos=0.85, "\beta"]
\end{tikzcd}
\end{center}
are $2$-commutative, that is, for every $T\in\Diff$ the following holds, where the dot $\cdot$ denotes the action $\mu$:
\begin{enumerate}
	\item $(g\cdot \alpha_{h,k}^x) \alpha_{g,hk}^x = \alpha_{g,h}^{k\cdot x}\alpha_{gh,k}^x$, for all $g,h,k \in G$ and $x\in \MM(T)$.
	
	\item $(g\cdot \beta^x )\alpha_{g,e}^x = 1_{g\cdot x} = \beta^{g\cdot x} \alpha^{x}_{e,g} $ for every $g\in G$, $x\in \MM(T)$ and $e$ the identity in $G$.
\end{enumerate}
And where $\alpha_{g,h}^{x}:g\cdot(h\cdot x) \rightarrow (gh)\cdot x$ and $\beta^x:x\rightarrow e\cdot x$ in $\MM(T)$.
 
\end{dfn}

\begin{dfn}
The $4$-tuple $(\mathcal{M},\mu, \alpha, \beta)$ is called a $G$-\textit{stack}, where $\mu$ is a $G$ action on $\mathcal{M}$. 
\end{dfn}

\begin{dfn} 
A \textit{morphism of $G$-stacks} between $(\mathcal{M},\mu, \alpha, \beta)$ and $(\mathcal{N}, \nu, \gamma, \delta)$ is a morphism of stacks $F:\MM \rightarrow \mathcal{N}$ together with a $2$-morphism $\sigma$ and the following 2-commutative diagram

\begin{center}
 \begin{tikzcd}[column sep=large]
G\times \mathcal{M} \arrow{r}{\mu} \arrow{d}[swap]{id_G \times f}
& \mathcal{M} \arrow{d}{f}  \arrow[dl,Rightarrow,"\sigma "]\\
G\times \mathcal{N}  \arrow{r}{\nu}
& \mathcal{N}
\end{tikzcd}
\end{center}
such that, for every $T\in \Diff$
\begin{enumerate}
	\item $\sigma_g^{h\cdot x}( g\cdot \sigma_{h}^x)\gamma_{g,h}^{F(x)} = F(\alpha_{g,h}^x) \sigma_{gh}^{x}$, for every $g,h\in G$ and $x\in \MM(T)$.
	
	\item $F(\beta^x)\sigma_e^x = \delta^{F(x)}$, for every object $x\in \MM(T)$ and $e$ the identity element of $G$.
\end{enumerate}
where $\sigma_g^x:F(g\cdot x)\rightarrow g\cdot F(x)$ in $\mathcal{N}(T)$. 

\end{dfn}

\begin{dfn}

A \textit{2-morphism of $G$-stacks} between 1-morphism of $G$-stacks, $(F,\sigma)$ and $(F',\sigma')$, is a 2-morphism of stacks $\phi: F\Rightarrow F'$  such that
\begin{enumerate}
	\item[$(3)$] $(\sigma_g^x)(g\cdot \phi_x)= (\phi_{g\cdot x})(\sigma_g^{'x})$ for every $g\in G$ and $x\in \MM(T)$.
\end{enumerate}

\noindent Here $\phi_x:F(x)\rightarrow F'(x)$ denotes the $2$-morphism $\phi$ when applied to $x\in \MM(T)$.

\end{dfn}

\begin{rem}
In this way, we obtain a 2-category of $G$-stacks denoted by $G$--$\St$. 
%In particular given two $G$-stacks $\mathcal{M}, \mathcal{N}$, there is the stack $Hom_{G-\mathfrak{St}}(\mathcal{M},\mathcal{N})$ of 1-morphisms and 2-morphisms between them.
\end{rem}

%\begin{dfn}
%Let $G$ be a Lie group acting on the differentiable stacks $\mathcal{M}$ and $\mathcal{N}$ via the morphism $\mu:G\times \mathcal{M} \rightarrow \mathcal{M}$ and $\nu: G\times \mathcal{N} \rightarrow \mathcal{N}$. A morphism of stacks $f:\mathcal{M}\rightarrow \mathcal{N}$ is a \textit{equivariant morphism} if we have the following $2$-commutative diagram 

%\begin{center}
% \begin{tikzcd}[column sep=large]
%G\times \mathcal{M} (T) \arrow{r}{\mu_T} \arrow{d}{id_G \times f}
%& \mathcal{M} (T) \arrow{d}{f} \\
%G\times \mathcal{N} (T) \arrow{r}{\nu_T} 
%& \mathcal{N} (T)
%\end{tikzcd}
%\end{center}

%for each $T\in \Diff$.

%\end{dfn}
\begin{rem}
A $G$-action of a Lie group $G$ on a smooth manifold $M$ coincides with the one above for differentiable stacks $\MM$, where the diagrams are now strictly commutative instead. Similarly, the notion of $G$-equivariant smooth maps in $\Diff$ in this special example coincides with the one of morphism of $G$-stacks.
\end{rem}

\section{Quotient stacks} \label{GeneralQSt}

\noindent The concept of quotient stacks for group actions on stacks was first developed and studied by Romagny \cite{romagny2003group} for algebraic stacks and by Ginot-Noohi \cite{ginot2012group} for topological stacks. This generalises the classical notions of quotient stacks arising from group actions on schemes, manifolds or topological spaces. Here again we will be working entirely in the differentiable setting of differentiable stacks with Lie group actions.

\begin{dfn}
Let $G$ be a Lie group acting on a differentiable stack $\MM$. Consider the pseudo-functor
\[ \MM/G : \Diff^{op} \rightarrow \Grpds \]
such that for each $T\in \Diff$, an element in $\mathcal{M}/ G(T)$ is a triple $t=(p, f, \sigma)$ such that $p:E\rightarrow T$ is a principal $G$-bundle and  $(f,\sigma): E\rightarrow \mathcal{M}$ is an equivariant morphism. The arrows in $\mathcal{M}/ G(T)$ are pairs $(u, \alpha)$ with a $G$-morphism $u:E\rightarrow E'$ and a $2$-commutative diagram of $G$-stacks given by
\begin{center}
\begin{tikzcd}
E \arrow{rr}{u}
\arrow[rd, ""{name=U}]{}[swap]{(f, \sigma)} 
&& E' \arrow{dl}{(f', \sigma')}[swap,""{name=V}]{} 
\arrow[Rightarrow, from=U, to=V, "\alpha"]\\
&\mathcal{M}
\end{tikzcd}
\end{center}
If there is a smooth map $T\xrightarrow{h} S$, then there exists a morphism $\MM/G(S)\rightarrow \MM/G(T)$ given by the pullback as in the following commutative diagram
\begin{center}
 \begin{tikzcd}
T\times_{S}E \arrow{r} \arrow{d}{h^\ast}
& E \arrow{r}{(f, \sigma)} \arrow{d}{p} & \MM\\
T \arrow{r}{h}
& S &
\end{tikzcd}
\end{center}
where $\MM/G(h)=h^\ast$.

\end{dfn}

\begin{prop}
Let $G$ be a Lie group with an action on a differentiable stack $\MM$. The pseudo-functor $\MM/G$ is a stack.  
\end{prop}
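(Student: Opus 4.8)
The plan is to verify the two defining axioms of a stack for the pseudo-functor $\MM/G$: first that isomorphisms form a sheaf (morphisms glue), and second that objects satisfy effective descent along covers in the site $\Diff$ of local diffeomorphisms. Throughout, the key strategy is to reduce every statement about $\MM/G$ to a corresponding statement that is already known to hold, namely the descent properties of principal $G$-bundles on one hand and the stack axioms for $\MM$ itself on the other. An object over $T$ is a pair consisting of a principal $G$-bundle $p \colon E \to T$ and an equivariant morphism $(f,\sigma)\colon E \to \MM$, so the data naturally splits into a ``bundle part'' and a ``stacky map part'', and each part can be glued separately using the structure already available.

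First I would treat the descent for morphisms. Given a cover $\{T_i \to T\}$ and two objects $t = (p, f, \sigma)$, $t' = (p', f', \sigma')$ over $T$, together with arrows $(u_i, \alpha_i)$ over each $T_i$ that agree on the overlaps $T_{ij}$, I need to produce a unique global arrow restricting to each. The $G$-bundle morphisms $u_i \colon E|_{T_i} \to E'|_{T_i}$ glue to a single $G$-morphism $u\colon E \to E'$ because morphisms of principal $G$-bundles satisfy descent (equivalently, $\Hom$-sheaves are sheaves in the smooth topology). The $2$-morphisms $\alpha_i \colon (f,\sigma)|_{T_i} \Rightarrow (f', \sigma') \circ u|_{T_i}$ then glue because $2$-morphisms in the stack $\MM$ form a sheaf; their compatibility on overlaps is exactly the given cocycle condition. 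Uniqueness follows from the separatedness half of the sheaf condition applied to both the bundle morphisms and the $2$-morphisms. This step is essentially formal once one observes that the gluing data for $\MM$-morphisms is precisely what the axioms for $\MM$ being a stack provide.

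Next I would establish effective descent for objects, which is the substantive part. Given descent data $\{t_i = (p_i, f_i, \sigma_i)\}$ over a cover $\{T_i \to T\}$, with gluing isomorphisms $(u_{ij}, \alpha_{ij})$ satisfying the cocycle condition over triple overlaps, I would first glue the principal $G$-bundles $p_i \colon E_i \to T_i$ along the $u_{ij}$ to obtain a global principal $G$-bundle $p \colon E \to T$; this uses that principal $G$-bundles themselves form a stack (satisfy effective descent), which is classical. The $E_i$ are then canonically identified with the restrictions $E|_{T_i}$. It remains to glue the equivariant morphisms $(f_i, \sigma_i)\colon E_i \to \MM$ into a global equivariant morphism $(f,\sigma)\colon E \to \MM$ compatible with the $2$-isomorphisms $\alpha_{ij}$. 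Since $\MM$ is a stack and the $\{E_i \to E\}$ form a cover of $E$ (being the restriction of the cover of $T$), the morphisms $f_i$ together with the $\alpha_{ij}$ constitute descent data for a morphism into $\MM$, which glues to a unique $f\colon E \to \MM$; one then checks the equivariance $2$-morphism $\sigma$ glues compatibly, using that the $G$-action structure is itself local.

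The main obstacle I expect is bookkeeping the $G$-equivariance through the descent, rather than any deep difficulty: the bundles and the maps each descend by standard stack-theoretic arguments, but one must check that the equivariance datum $\sigma$ and the cocycle conditions for the $\alpha_{ij}$ interact correctly, i.e.\ that the glued morphism $f$ carries a glued equivariance structure $\sigma$ making $(f,\sigma)$ genuinely a morphism of $G$-stacks over the glued bundle $E$. Concretely, the coherence conditions (1) and (2) in the definition of an equivariant morphism must be verified for the glued data, and this amounts to checking them locally where they hold by hypothesis, then invoking uniqueness in the sheaf condition to conclude they hold globally. Once these compatibilities are organized carefully, the verification is routine, and I would present it by emphasizing the two reductions—to descent for $G$-bundles and to the stack axioms for $\MM$—and leaving the diagram-chase verifications of the cocycle compatibilities as direct checks.
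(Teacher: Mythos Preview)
Your proposal is correct and follows the same strategy as the paper, which simply observes that since principal $G$-bundles can be glued, the stack axioms hold. Your write-up is considerably more detailed than the paper's one-sentence sketch---in particular you make explicit the separate gluing of the equivariant morphisms $(f_i,\sigma_i)$ into $\MM$ using that $\MM$ itself is a stack, a point the paper leaves implicit---but the underlying approach is identical.
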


\begin{proof}
Since it is possible to glue principal $G$-bundles, the gluing conditions in the definition of a stack hold. Therefore the quotient $\MM/G$ is indeed a stack.
\qedhere

\end{proof}

\begin{exm}
Let $M$ be a smooth manifold with an action of a Lie group $G$. In this special case we obtain the usual quotient stack $[M/G]$ for Lie group actions on smooth manifolds, defined for each $T\in \Diff$ via the groupoid of sections as
$$ [M/G](T)=\left\langle  (E\xrightarrow{p} T, E\xrightarrow{f} M) : p \text{ is a principal $G$-bundle}, f \text{ is an equivariant map} \right\rangle  .$$
This is the same as the stacky quotient $M/G=\text{Hom}(- ,M)/G$ defined as above (see also \cite{heinloth2005notes}). 
\end{exm}

Another way to consider this stack, is by defining a prestack $\mathcal{P}$ such that for $T\in \Diff $ we have $\mathcal{P}(T)= \mathcal{M}(T)$ and morphisms between $x$ and $y$ in $\MM(T)$ are pairs $(g,\varphi)$ with $g\in G$ and $\varphi:g.x \rightarrow y$ a morphism in $\mathcal{M}(T)$. If we use stackification, we get the stack $(\mathcal{M}/ G)^\ast = \tilde{\mathcal{P}}$ associated to $\mathcal{P}$
and we have (see \cite[Sec 4]{ginot2012group}):

\begin{prop}
The stacks $\mathcal{M}/ G $ and $(\mathcal{M} / G)^*$ are isomorphic.

\end{prop}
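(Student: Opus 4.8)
The plan is to exploit the universal property of stackification. Since the preceding proposition already establishes that $\MM/G$ is a stack, it suffices to produce a morphism of prestacks $\Phi\colon \mathcal{P} \to \MM/G$ and then to check that the induced morphism $\bar\Phi\colon (\MM/G)^* = \tilde{\mathcal{P}} \to \MM/G$ is an equivalence of stacks. First I would define $\Phi$ on an object $x\in\mathcal{P}(T)=\MM(T)$ by sending it to the trivial principal $G$-bundle $\mathrm{pr}_T\colon G\times T \to T$ (with $G$ acting by left translation on the first factor), equipped with the equivariant morphism $(f_x,\sigma_x)\colon G\times T \to \MM$ given by $f_x = \mu\circ(\mathrm{id}_G\times x)$, where the equivariance datum $\sigma_x$ is supplied by the associativity $2$-morphism $\alpha$ of the action. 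On a morphism $(g,\varphi)\colon x\to y$ of $\mathcal{P}(T)$ --- with $g\in G$ and $\varphi\colon g\cdot x\to y$ in $\MM(T)$ --- I would send it to a pair $(u_g,\alpha_\varphi)$, where $u_g\colon G\times T\to G\times T$ is the $G$-bundle isomorphism induced by $g$ and $\alpha_\varphi$ is the $2$-morphism assembled from $\varphi$ together with the coherence data $\alpha,\beta$.

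Verifying that $(u_g,\alpha_\varphi)$ satisfies the cocycle conditions defining a morphism of $G$-stacks, and that $\Phi$ is a pseudo-functor natural in $T$, is a direct if somewhat lengthy diagram chase using the $G$-action axioms. With $\Phi$ in hand, the universal property of stackification yields the canonical factorisation $\bar\Phi\colon (\MM/G)^*\to\MM/G$. To prove $\bar\Phi$ is an equivalence it is enough, since both source and target are stacks, to show it is a local equivalence, that is, fully faithful and locally essentially surjective.

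Local essential surjectivity is the easy half: given any object $(p\colon E\to T,(f,\sigma))$ of $\MM/G(T)$, choosing a trivialising open cover $\{U_i\}$ of $T$ identifies $E|_{U_i}\cong G\times U_i$, and transporting $(f,\sigma)$ through a local section $s_i\colon U_i\to E$ exhibits the restriction as $\Phi(f\circ s_i)$ up to isomorphism; hence every object is locally in the essential image, which is all that is required over the site of local diffeomorphisms.

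The hard part will be full faithfulness, where I must compare the sheafified morphism presheaf of $\mathcal{P}$ with $\underline{\mathrm{Hom}}_{\MM/G}(\Phi x,\Phi y)$. A morphism in the target between trivial torsors is a $G$-bundle isomorphism $G\times T\to G\times T$ over $T$, equivalently a smooth map $T\to G$, together with a compatible $2$-morphism of equivariant maps; I would need to check that this datum matches exactly the local sections of the morphism presheaf $(g,\varphi)$ of $\mathcal{P}$ after sheafification, the passage from a fixed $g\in G$ to smooth $G$-valued functions being produced precisely by the Hom-sheafification step of the stackification. The genuine bookkeeping lies in tracking the equivariant structures $\sigma_x,\sigma_y$ and the coherence $2$-morphisms $\alpha,\beta$ through this identification so that the comparison respects composition; once this compatibility is settled, $\bar\Phi$ induces an isomorphism of Hom-sheaves and is locally essentially surjective, hence an equivalence of stacks.
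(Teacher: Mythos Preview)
Your proposal is correct and follows essentially the same route as the paper: define $\Phi\colon\mathcal{P}\to\MM/G$ by sending $x$ to the trivial $G$-bundle with equivariant map $\mu\circ(\mathrm{id}_G\times x)$, check local essential surjectivity via a trivialising cover, check full faithfulness, and conclude by the universal property of stackification. The only cosmetic difference is that the paper packages $\Phi$ on a morphism $(g,\varphi)$ using the identity bundle map $\mathrm{id}_{G\times T}$ with the shifted structure map $\mu\circ(\mathrm{id}\times g\cdot x)$, whereas you absorb the $g$-shift into a nontrivial bundle automorphism $u_g$; these are equivalent, and your remark that smooth $T\to G$ arises from Hom-sheafification is a point the paper treats more tersely.
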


\begin{proof}

Consider the morphism $\Phi: \mathcal{P}\rightarrow \mathcal{M}/ G $ such that for each $T\xrightarrow{x} \mathcal{M}$, $\Phi(x)$ is the following principal $G$-bundle

\begin{center}
 \begin{tikzcd}[column sep=large]
G\times T \arrow{d}{p_2} \arrow{r}{\mu \circ (id \times x)} & \mathcal{M} \\
 T 
\end{tikzcd}
\end{center}
where $\mu$ is the action on $\MM$.

We recall that a morphism $(g,\varphi): x \rightarrow y$ in $\mathcal{P}$ is a morphism $\varphi: g\cdot x \rightarrow y$. We define $\Phi(g,\varphi)$ as the $2$-morphism in the following commutative diagram

\begin{center}
\begin{tikzcd} [column sep=large, row sep=large]
G\times T \arrow{rr}{id_{G\times T}}
\arrow[rd, ""{name=U}]{}[swap]{\mu \circ (id\times g\cdot x)} 
&& G\times T \arrow{dl}{\mu \circ (id\times y)}[swap,""{name=V}]{} 
\arrow[Rightarrow, from=U, to=V, "id_G \times \varphi"]\\
&\mathcal{M}
\end{tikzcd}
\end{center}
\noindent where $\varphi$  is the morphism described by the commutative diagram

\begin{center}
\begin{tikzcd}
T \arrow{rr}{id_T}
\arrow[rd, ""{name=U}]{}[swap]{g\cdot x} 
&& T \arrow{dl}{y}[swap,""{name=V}]{} 
\arrow[Rightarrow, from=U, to=V, "\varphi"]\\
&\mathcal{M}
\end{tikzcd}
\end{center}

\noindent and $\Phi$ is a fully faithful morphism. To see this, we consider $T\in \Diff$ and
\[ \text{Hom}_{\mathcal{P}(T)} (x,y)\xrightarrow{\Phi_{x,y}}\text{Hom}_{\MM/G(T)}(\Phi(x), \Phi(y)). \]  
Then a morphism in $\text{Hom}_{\MM/G(T)}(\Phi(x), \Phi(y))$ is given by $h\times id_T: G\times T \rightarrow G\times T$, where $g_1\cdot g x \cong h(g_1)\cdot y  $, that means $h^{-1}(g_1)\cdot (g_1\cdot g x) \cong y$, but as $y\cong g\cdot x$ we have that $h^{-1}(g_1)\cdot g_1 \cong e$, the identity element of $G$. Therefore $h = id_G$, $\Phi_{x,y}$ is a bijection and $\Phi$ is fully faithful. We observe that this morphism is locally essentially surjective since its image is given by the trivial bundles.
This morphism therefore extends to an isomorphism  of stacks via stackification
$\Phi':(\mathcal{M}/G)^* \rightarrow \mathcal{M} /G$
as desired.\qedhere

\end{proof}

\begin{rem}
We observe that $\MM(T)$ can be considered as a subcategory of $\MM/G(T)$, where to each element $x\in \MM(T)$, by the $2$-Yoneda Lemma, we can associate the morphism $T\xrightarrow{x}\MM$ and assign the element in $\MM/G(T)$ given by the diagram 

\begin{center}
 \begin{tikzcd}[column sep=large]
G\times T \arrow{d}{p_2} \arrow{r}{\mu \circ (id \times x)} & \mathcal{M} \\
 T 
\end{tikzcd}
\end{center}
which is the trivial principal bundle over $T$.
\end{rem}

\begin{exm}
For any given stack $\mathcal{N}$, we can associate a $G$-stack  given by $(\mathcal{N}, pr_2, id, id)$, where $pr_2:G\times \mathcal{N} \rightarrow \mathcal{N}$ is the projection to the second component. We have the following $2$-commutative diagrams

\begin{center}
 \begin{tikzcd}[column sep=large]
G\times G\times \mathcal{N} (T) \arrow[r, ""{below, name=W, inner sep=1pt}, "m\times id_{\mathcal{N}(T)}"{above}]{} \arrow[d,""{right,name=X,inner sep=1pt},"id_G \times pr_2"{left}]
& G\times \mathcal{N} (T) \arrow{d}{pr_2} \\
G\times \mathcal{N} (T) \arrow{r}{pr_2} \arrow[ur,Rightarrow,"\alpha=id"]%\arrow[Rightarrow, bend right,  from=X, to=W, " "]
& \mathcal{N} (T)
\end{tikzcd}
\end{center}
and

\begin{center}
\begin{tikzcd}[column sep=large]

G\times \mathcal{N} (T)
\arrow[r,"pr_2"{above}]
&\mathcal{N} (T) \\
\mathcal{N}(T)\arrow[u, "1 \times id_{\mathcal{N}(T)}"{left}]{} \arrow[ur," id_{\mathcal{N}(T)}"{swap}]{}  &\arrow[ul, Rightarrow,shorten <= 5em, shorten >= 0.02em, pos=0.85, "\beta=id"]
\end{tikzcd}
\end{center}
with
\begin{enumerate}
	\item $(g\cdot \alpha_{h,k}^x) \alpha_{g,hk}^x = \alpha_{g,h}^{k\cdot x}\alpha_{gh,k}^x$, for all $g,h,k \in G$ and $x\in \mathcal{N}(T)$.
	
	\item $(g\cdot \beta^x )\alpha_{g,e}^x = 1_{g\cdot x} = \beta^{g\cdot x} \alpha^{x}_{e,g} $ for every $g\in G$, $x\in \mathcal{N}(T)$ and with $e$ the identity in $G$,
\end{enumerate}
because $\alpha$ and $\beta$ are identities here. Therefore $(\mathcal{N}, pr_2, id, id)$ is a $G$-stack.
\end{exm}

\noindent In general, we can consider the $2$-functor $\iota:\St \rightarrow G-\St$ such that 

\begin{enumerate}
	\item for $\mathcal{N}\in \St$, we have $\iota(\mathcal{N}) = (\mathcal{N}, pr_2, id, id)$.
	\item For an $1$-morphism of stacks $\MM\xrightarrow{F}\mathcal{N}$, we have $\iota(\MM)\xrightarrow{\iota(F)}\iota(\mathcal{N})$ where $\iota(F)=(F, id)$.
	\item For a $2$-morphism of stacks $F\xrightarrow{\phi} F' $, we have $(F, id)\xrightarrow{\phi} (F',id) $.
\end{enumerate}

This is a $2$-functor since the identities provided by $\iota$ preserve all identities and all compositions. So we can verify that the notion of a quotient stack $\MM/G$ also coincides with the definition given by Romagny \cite[2.3]{romagny2003group} in the algebro-geometric context. 

\begin{prop}
The stack $\mathcal{M}/ G$ 2-represents the $2$-functor $\St\rightarrow \mathbf{Cat}$ defined by
\[ F(\mathcal{N})= Hom_{G-\St}(\mathcal{M},\iota(\mathcal{N})) \]
\end{prop}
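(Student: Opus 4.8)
The plan is to exhibit, for every stack $\mathcal{N}$, a natural equivalence of categories
\[
\mathrm{Hom}_{\St}(\mathcal{M}/G, \mathcal{N}) \xrightarrow{\ \simeq\ } \mathrm{Hom}_{G\text{-}\St}(\mathcal{M}, \iota(\mathcal{N})) = F(\mathcal{N}),
\]
and to check that it is pseudo-natural in $\mathcal{N}$; this is precisely the assertion that $\mathcal{M}/G$ 2-represents $F$. First I would replace $\mathcal{M}/G$ by the isomorphic stack $(\mathcal{M}/G)^*=\tilde{\mathcal{P}}$ of the previous Proposition, where $\mathcal{P}$ is the prestack with $\mathcal{P}(T)=\mathcal{M}(T)$ on objects and with morphisms $x\to y$ given by pairs $(g,\varphi)$, $g\in G$, $\varphi\colon g\cdot x\to y$. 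By the universal property of stackification, restriction along $\mathcal{P}\to\tilde{\mathcal{P}}$ induces an equivalence $\mathrm{Hom}_{\St}(\tilde{\mathcal{P}},\mathcal{N})\simeq\mathrm{Hom}_{\mathrm{PreSt}}(\mathcal{P},\mathcal{N})$ for any stack $\mathcal{N}$, so it suffices to produce a natural equivalence $\mathrm{Hom}_{\mathrm{PreSt}}(\mathcal{P},\mathcal{N})\simeq\mathrm{Hom}_{G\text{-}\St}(\mathcal{M},\iota(\mathcal{N}))$.

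Next I would construct the comparison functor explicitly. Given a $G$-morphism $(f,\sigma)\colon\mathcal{M}\to\iota(\mathcal{N})$ — that is, a morphism $f\colon\mathcal{M}\to\mathcal{N}$ together with $\sigma_g^x\colon f(g\cdot x)\to g\cdot f(x)=f(x)$ in $\mathcal{N}(T)$, the last equality holding because $\iota(\mathcal{N})$ carries the trivial action $pr_2$ — I define a morphism of prestacks $\mathcal{P}\to\mathcal{N}$ which is $f$ on objects and which sends a morphism $(g,\varphi)\colon x\to y$ of $\mathcal{P}(T)$, i.e. $\varphi\colon g\cdot x\to y$, to the composite $f(x)\xrightarrow{(\sigma_g^x)^{-1}}f(g\cdot x)\xrightarrow{f(\varphi)}f(y)$. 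The point to verify is functoriality: that this assignment respects composition and identities in $\mathcal{P}(T)$. This is exactly where the two cocycle conditions enter — condition $(1)$ for $\sigma$, together with the associativity constraint $\alpha$ that appears in the composition law of $\mathcal{P}$, yields compatibility with composition, while condition $(2)$ together with the unit constraint $\beta$ handles identities. Conversely, a prestack morphism $\mathcal{P}\to\mathcal{N}$ restricts to a functor $f$ on objects, and its value on the distinguished morphisms $(g,1_{g\cdot x})\colon x\to g\cdot x$ recovers the equivariance data $\sigma_g^x$; unwinding the definitions shows these two assignments are mutually quasi-inverse.

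Then I would treat the $2$-cells. A $2$-morphism of prestacks between two such functors is a natural transformation whose components $\phi_x\colon f(x)\to f'(x)$ must commute with the images of all morphisms $(g,\varphi)$; restricting again to the morphisms $(g,1_{g\cdot x})$ shows this is equivalent to condition $(3)$ in the definition of a $2$-morphism of $G$-stacks. This gives full faithfulness of the comparison functor on each $\mathrm{Hom}$-category, completing the equivalence for fixed $\mathcal{N}$. Finally, pseudo-naturality in $\mathcal{N}$ follows because all of the assignments above are given by post-composition with a morphism $\mathcal{N}\to\mathcal{N}'$, and these visibly commute with the comparison up to canonical coherent isomorphism.

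The step I expect to be the main obstacle is the functoriality verification in the second paragraph: one must match, carefully and on the nose, the composition law of $\mathcal{P}$ — in which the associator $\alpha$ of the action on $\mathcal{M}$ appears when composing $(g,\varphi)$ with $(g',\varphi')$ — against cocycle condition $(1)$ for $\sigma$, and likewise the unit behaviour against condition $(2)$ and $\beta$. Keeping track of these coherence isomorphisms, rather than any conceptual difficulty, is the real work; everything else is formal once the stackification reduction and the dictionary between the $G$-equivariance data and the morphisms of $\mathcal{P}$ are in place.
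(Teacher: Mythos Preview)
Your proposal is correct and complete at the level of detail given, but the route differs from the paper's. You work on the $\mathcal{M}$-side: you replace $\mathcal{M}/G$ by the stackification $\tilde{\mathcal{P}}$ of the prestack $\mathcal{P}$, invoke the universal property of stackification to reduce to $\mathrm{Hom}_{\mathrm{PreSt}}(\mathcal{P},\mathcal{N})$, and then set up an explicit dictionary between prestack morphisms $\mathcal{P}\to\mathcal{N}$ and $G$-equivariant morphisms $(\mathcal{M},\mu)\to\iota(\mathcal{N})$, with the cocycle conditions for $\sigma$ matching the functoriality in $\mathcal{P}$. The paper instead works on the $\mathcal{N}$-side: it observes that for the trivial action the prestack $\mathcal{P}_{\mathcal{N}}$ already has $\mathcal{N}(T)$ as its objects, so stackification yields $\iota(\mathcal{N})/G\cong\mathcal{N}$; the equivalence of Hom-categories then follows by applying the quotient construction functorially to a $G$-morphism $\mathcal{M}\to\iota(\mathcal{N})$ and invoking stackification. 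Your argument is longer but makes the coherence bookkeeping (conditions $(1)$--$(3)$ versus the associator $\alpha$ and unit $\beta$) completely transparent, and it gives the equivalence on $2$-cells explicitly; the paper's argument is shorter and more conceptual, essentially encoding the adjunction $(-)/G\dashv\iota$, but it leaves those verifications implicit.
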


\begin{proof}

Let $f\in Hom_{G-\St}(\mathcal{M},\iota(\mathcal{N}))$ be a morphism of $G$-stacks. If we consider the prestack $\mathcal{P}_{\mathcal{N}}$ associated to $\mathcal{N}$, we can see that any element in $\mathcal{P}_{\mathcal{N}}(T)$ is given by

\begin{center}
 \begin{tikzcd}[column sep=large]
G\times T \arrow{d}{p_2} \arrow{r}{pr_2 \circ (id \times x)} & \mathcal{M} \\
 T 
\end{tikzcd}
\end{center}
As the action is given by $pr_2$ we get that the elements in $\mathcal{P}_{\mathcal{N}}(T)$ are in bijective correspondence with the elements in $\mathcal{N}(T)$. Therefore if we use stackification we get that $\mathcal{N}/G \cong \mathcal{N}$. Hence we get an element in $Hom(\mathcal{M}/ G, \mathcal{N})$ and we have that $Hom_{G-\St}(\mathcal{M},\iota(\mathcal{N})) \cong Hom(\mathcal{M}/ G, \mathcal{N})$ by stackification.
\end{proof}

\begin{prop}
The projection morphism $\mathcal{M}\xrightarrow{q} \mathcal{M} / G$ has local sections.
\end{prop}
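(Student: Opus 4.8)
The plan is to prove the local sections property directly, by reducing to trivial principal bundles, where an equivariant morphism into $\MM$ is completely controlled by its restriction to the identity section. First recall, unwinding the definition and the Remark following the isomorphism $\MM/G\cong(\MM/G)^{*}$, that $q$ sends an object $x\in\MM(T)$ (equivalently a morphism $T\xrightarrow{x}\MM$) to the \emph{trivial} bundle $G\times T\xrightarrow{p_2}T$ equipped with the equivariant morphism $\mu\circ(\mathrm{id}_G\times x)$. Thus, to say that $q$ has local sections means: for every $T\in\Diff$ and every object $t=\bigl(p\colon E\to T,\ (f,\sigma)\bigr)\in\MM/G(T)$, there is an open cover $\{U_i\to T\}$ in the site together with objects $x_i\in\MM(U_i)$ and $2$-isomorphisms $q(x_i)\cong t|_{U_i}$ in $\MM/G(U_i)$. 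This is the statement I would establish.

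First I would invoke local triviality of principal bundles. Since $p\colon E\to T$ is a principal $G$-bundle, there is an open cover $\{U_i\}$ of $T$ and $G$-equivariant trivialisations $\psi_i\colon G\times U_i\xrightarrow{\ \sim\ }p^{-1}(U_i)$, where $G$ acts on $G\times U_i$ by left translation on the first factor. Pulling the equivariant morphism $(f,\sigma)$ back along $\psi_i$ yields an equivariant morphism $(f_i,\sigma_i)\colon G\times U_i\to\MM$, and under these trivialisations the restriction $t|_{U_i}$ becomes the object $\bigl(G\times U_i\xrightarrow{p_2}U_i,\ (f_i,\sigma_i)\bigr)$ of $\MM/G(U_i)$. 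Conceptually, this is just the observation that the pullback of $q$ along $t$ is the bundle $E\to T$ itself, which admits local sections precisely because it is locally trivial; the argument below makes the resulting lift explicit.

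Next I would produce the lift. Define $x_i\in\MM(U_i)$ as the restriction of $f_i$ to the identity section, namely $x_i=f_i\circ(e\times\mathrm{id}_{U_i})$, so $x_i(u)=f_i(e,u)$. The equivariance $2$-morphism then supplies, for every $g\in G$ and $u\in U_i$, an arrow $(\sigma_i)_g^{(e,u)}\colon f_i(g,u)=f_i\bigl(g\cdot(e,u)\bigr)\to g\cdot f_i(e,u)=g\cdot x_i(u)$. Setting $\phi_{(g,u)}:=(\sigma_i)_g^{(e,u)}$ assembles these into a $2$-morphism $\phi\colon f_i\Rightarrow\mu\circ(\mathrm{id}_G\times x_i)$, which is invertible since $\MM$ is valued in groupoids. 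Because the underlying bundle is the trivial bundle $G\times U_i\to U_i$, the pair $(\mathrm{id}_{G\times U_i},\phi)$ is a candidate isomorphism $t|_{U_i}\cong q(x_i)$ in $\MM/G(U_i)$, exhibiting $x_i$ as a section of $q$ over $U_i$. Ranging over $i$ then yields the required cover and lifts.

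The local triviality and the bookkeeping of trivialisations are routine; the \textbf{main obstacle} is the verification in the third step, namely that the pointwise arrows $(\sigma_i)_g^{(e,u)}$ genuinely define a $2$-morphism of $G$-stacks, i.e. that $(\mathrm{id},\phi)$ is compatible with the equivariance structures and hence is a well-defined isomorphism in $\MM/G(U_i)$. Concretely, the canonical equivariance structure on $\mu\circ(\mathrm{id}_G\times x_i)$ is built from the associativity $2$-morphism $\alpha$, and checking compatibility amounts to condition $(3)$ in the definition of a $2$-morphism of $G$-stacks, which in turn follows from the cocycle identity $(1)$ relating $\sigma_i$ and $\alpha$ together with the unit normalisation $(2)$. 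This is the only place where the full equivariance data $(\sigma,\alpha,\beta)$ — rather than merely the underlying map $f$ — is used, and it is precisely these axioms that force $\phi$ to be a coherent isomorphism. Once this compatibility is confirmed, the existence of the cover $\{U_i\}$ and of the lifts $\{x_i\}$ is exactly the assertion that $q$ has local sections.
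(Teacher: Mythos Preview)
Your argument is correct and follows essentially the same route as the paper: trivialise the principal bundle $E\to T$ locally, and take the restriction of the equivariant map to the identity section $U_i\to G\times U_i$ to produce the lift $x_i\in\MM(U_i)$. The paper's proof is terser and does not spell out the verification that $q(x_i)\cong t|_{U_i}$ in $\MM/G(U_i)$, whereas you make this explicit via the $2$-morphism built from $\sigma_i$; this extra care is welcome but not a different strategy.
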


\begin{proof}

First we need to check how the morphism $q$ is defined. Let $V$ be a smooth manifold, then we have

\[ \mathcal{M}(V)\xrightarrow{q} \mathcal{M}/G (V) \]
\[ V\xrightarrow{f} \mathcal{M} \mapsto (G\times V \xrightarrow{pr_2} V , G\times V \xrightarrow{\mu \circ (id_G \times f)} \mathcal{M}  ) \]

\noindent We need to check that in the following diagram 

\begin{center}

\begin{tikzcd}

 V\times_{\mathcal{M}/G}\mathcal{M}  \arrow{r}\arrow{d} & \mathcal{M} \arrow[d, "q"] \\
 V\arrow[r, "p"] & \mathcal{M}/G
\end{tikzcd}

\end{center}
there exist local sections on $V\times_{\mathcal{M}/G}\mathcal{M}\rightarrow V$ that makes this diagram commute. For this, we consider a covering $\{ U_i \xrightarrow{i} V \}$ such that the $U_i$ are local trivialisation of $E\rightarrow V$. Then we get a diagram

\begin{center}

\begin{tikzcd} 

 G\times U_i  \arrow[r,"n"]\arrow{d} & E \arrow[d, "p"] \arrow[r, "h"] & \mathcal{M} \\
 U_i \arrow[r, "i"] & T &

\end{tikzcd}

\end{center}
If we consider the section $s_i:U_i \rightarrow G\times U_i$, then the section for $q$ is given by $s = h \circ n \circ s_i$ in $\MM(U_i)$. \qedhere

\end{proof}

Since we are working with $G$-stacks, we need to ask for an extra condition to obtain a differentiable $G$-stack, namely having a smooth manifold with a smooth action of $G$ and a morphism that preserves the action as follows: 

\begin{dfn} \label{G-DiffSt}

Let $G$ be a Lie group. A $G$-stack $\mathcal{M}$ is called a \textit{differentiable $G$-stack} if there is a smooth manifold $X$ with a smooth action $\sigma:G\times X \rightarrow X$ and a $1$-morphism of $G$-stacks $p:X\rightarrow \mathcal{M}$ such that:
\begin{enumerate}
	\item $p$ is representable.
	\item $p$ is a submersion.
\end{enumerate}
The morphism $p: X\rightarrow \mathcal{M}$ is then called a $G$-\textit{atlas}  \textit{for $\mathcal{M}$}.

\end{dfn}

\begin{rem}
We obtain a $2$-category of differentiable $G$-stacks denoted by $G$-$\Diff\St$.
\end{rem}

\begin{prop}\label{ActionInducedSimpManifold}
Let $\MM$ be a differentiable $G$-stack with $G$-atlas given by $X\xrightarrow{p}\MM$. If $\sigma$ is the smooth action of $G$ on $X$, this action induces a simplicial smooth action $\sigma_{\bullet}$ on the nerve of the associated Lie groupoid $(X\times_\MM X \rightrightarrows X)$.
\end{prop}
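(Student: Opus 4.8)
The plan is to construct, in each simplicial degree $n$, a smooth $G$-action $\sigma_n$ on the manifold $X_n$ of $n$-simplices of the nerve, and then to check that the family $\sigma_\bullet=(\sigma_n)_{n\geq 0}$ is compatible with every face and degeneracy map. First I would record the shape of the nerve: writing $X_n=X\times_\MM X\times_\MM\cdots\times_\MM X$ for the $(n+1)$-fold fibre product, an $n$-simplex is a tuple $(x_0,\dots,x_n)$ of points of $X$ together with $2$-isomorphisms $\phi_k\colon p(x_{k-1})\to p(x_k)$ in $\MM$ for $1\leq k\leq n$. Since the atlas $p$ is representable and a submersion, each $X_n$ is a smooth manifold, the face maps $d_i$ (dropping $x_i$, and composing the two adjacent $\phi$'s when $0<i<n$) and the degeneracies $s_j$ (repeating $x_j$ via the unit arrow) are smooth, and $X_0=X$ already carries the given action $\sigma$.

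Next I would define $\sigma_n$ through the universal property of the fibre product. Because $p\colon X\to\MM$ is a $1$-morphism of $G$-stacks, with $X$ regarded as the strict $G$-stack determined by $\sigma$, it is equipped with $2$-isomorphisms $\sigma_g^x\colon p(g\cdot x)\to g\cdot p(x)$. For $g\in G$ I let $G$ act diagonally on points, $x_k\mapsto g\cdot x_k$, and transport each gluing isomorphism by setting
\[
\psi_k=(\sigma_g^{x_k})^{-1}\,(g\cdot\phi_k)\,\sigma_g^{x_{k-1}}\colon p(g\cdot x_{k-1})\longrightarrow p(g\cdot x_k).
\]
The universal property then produces a unique map $\sigma_n\colon G\times X_n\to X_n$, which is smooth because $\sigma$ is smooth and the $\sigma_g^x$ assemble into a morphism of stacks.

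It then remains to verify two things. That each $\sigma_n$ is a genuine action follows from coherence condition $(1)$ for the morphism $p$, together with the naturality of the associator $\alpha$ of the $\MM$-action and the fact that the action on $X$ is strict, so that its own associator and unitor are identities; condition $(2)$ and $\beta$ dispatch the unit. For simplicial compatibility, the degeneracies are equivariant because transporting the identity arrow returns the identity, the outer faces $d_0$ and $d_n$ are equivariant as projections onto acted-upon factors, and for an inner face $d_i$ the identity
\[
\psi_{i+1}\,\psi_i=(\sigma_g^{x_{i+1}})^{-1}\,\big(g\cdot(\phi_{i+1}\phi_i)\big)\,\sigma_g^{x_{i-1}}
\]
shows that the intermediate components $\sigma_g^{x_i}$ cancel, so that transporting and then composing agrees with composing and then transporting.

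The main obstacle is the $2$-categorical bookkeeping: the entire argument rests on showing that the transported gluing data $\psi_k$ behave coherently, both under composition of arrows (for equivariance of the inner faces) and under iteration of the action (for associativity of $\sigma_n$). I expect to discharge this by the two short diagram chases indicated above — the cancellation of the intermediate $\sigma_g^{x_i}$ for the faces, and cocycle identity $(1)$ for associativity — after which the smoothness assertions are routine, since every map in sight is assembled from $\sigma$, the structure maps of the nerve, and the smoothly varying $2$-isomorphisms $\sigma_g^x$.
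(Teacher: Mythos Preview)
Your proposal is correct and follows essentially the same approach as the paper: act diagonally on the tuple $(x_0,\dots,x_n)$ and transport the gluing $2$-isomorphisms using the $2$-isomorphism $\sigma_g^x$ that comes with $p$ being a morphism of $G$-stacks, then observe that the resulting maps are smooth and compatible with the simplicial structure. Your treatment is in fact more explicit than the paper's own proof, which records the transported arrows only as $p(g\cdot x_1)\Rightarrow\cdots\Rightarrow p(g\cdot x_{n+1})$, asserts that the associator and unitor for $\sigma_n$ are identities, and checks simpliciality via a single commuting square with the face maps, without spelling out the cancellation of the intermediate $\sigma_g^{x_i}$ or the use of coherence condition~(1).
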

\begin{proof}
We consider the $(n+1)$-fold fibred product $X_n = X\times_\MM \ldots \times_\MM X$ and define 
$$\sigma_{n,T}: G\times X_n(T) \rightarrow X_n(T)$$
such that 
$$\sigma_{n,T}(g, (x_1, x_2, \ldots, x_{n+1} ; p(x_1)\Rightarrow\ldots \Rightarrow p(x_{n+1})))$$
$$= (g\cdot x_1,g\cdot x_2, \ldots, g\cdot x_{n+1} ; p(g\cdot x_1)\Rightarrow\ldots \Rightarrow p(g\cdot x_{n+1}))$$
This morphism is well-defined since we have $g\cdot p(x_1)\Rightarrow\ldots \Rightarrow g\cdot p(x_{n+1})$ and $g\cdot p(z) \cong p(g\cdot z)$ for any $z\in X(T)$, because $p$ is a $1$-morphism of $G$-stacks.
We observe that we obtain therefore the following diagrams:

\begin{center}
 \begin{tikzcd}[column sep=large]
G\times G\times X_n (T) \arrow[r, ""{below, name=W, inner sep=1pt}, "m\times id_{X_n(T)}"{above}]{} \arrow[d,""{right,name=X,inner sep=1pt},"id_G \times\sigma_{n,T}"{left}]
& G\times X_n (T) \arrow{d}{\sigma_{n,T}} \\
G\times X_n (T) \arrow{r}{\sigma_{n,T}} \arrow[ur,Rightarrow,"\alpha"]%\arrow[Rightarrow, bend right,  from=X, to=W, " "]
& X_n (T)
\end{tikzcd}
\end{center}
and
%m\times id_{\mathcal{M}(T)}
%id_G \times\mu_T

\begin{center}
\begin{tikzcd}[column sep=large]

G\times X_n (T)
\arrow[r,"\sigma_{n,T}"{above}]
& X_n (T) \\
X_n(T)\arrow[u, "e \times id_{X_n(T)}"{left}]{} \arrow[ur," id_{X_n(T)}"{swap}]{}  &\arrow[ul, Rightarrow,shorten <= 5em, shorten >= 0.02em, pos=0.85, "\beta"]
\end{tikzcd}
\end{center}
where $\alpha$ and $\beta$ are identities. By construction, the collection of all these morphisms gives a simplicial map and it is a smooth simplicial map since it makes the following diagram 
\begin{center}
 \begin{tikzcd}[column sep=large]
 G\times X_n (T) \arrow[r, ""{below, name=W, inner sep=1pt}, "\sigma_{n,T}"{above}]{} \arrow[d,""{right,name=X,inner sep=1pt},""{left}]
&  X_n (T) \arrow{d} \\
G\times X (T) \arrow{r}{\sigma} %\arrow[ur,Rightarrow,"\alpha"]\arrow[Rightarrow, bend right,  from=X, to=W, " "]
& X (T)
\end{tikzcd}
\end{center}
commute, where the vertical maps are given by the different compositions of face maps of the simplicial manifold $X_\bullet$.
\end{proof}

We now have the following

\begin{prop} \label{AtlasQuotientStack}
Let $\mathcal{M}$ be a differentiable $G$-stack and let $X\xrightarrow{p}\mathcal{M}$ be a $G$-atlas. Then
$\mathcal{M}/G$ is a differentiable stack and the composition $X\rightarrow \mathcal{M}\xrightarrow{q}  \mathcal{M} / G$ is an atlas. \end{prop}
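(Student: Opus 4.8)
The plan is to lean on the two preceding results, that $\MM/G$ is already a stack and that the projection $q\colon \MM\to\MM/G$ has local sections, and to reduce everything to the single hypothesis that the $G$-atlas $p\colon X\to\MM$ is a representable submersion (Definition~\ref{G-DiffSt}). By the definition of a differentiable stack it suffices to show that the composite $\pi = q\circ p\colon X\to\MM/G$ is representable and admits local sections. First I would fix a smooth manifold $T$ together with a morphism $T\to\MM/G$, which by definition of $\MM/G$ is a pair $(E\xrightarrow{r} T,\ (f,\sigma)\colon E\to\MM)$ consisting of a principal $G$-bundle $r$ and an equivariant morphism into $\MM$.

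The heart of the argument is the identification of the fibre product. I claim there is a natural isomorphism
\[
 T\times_{\MM/G} X \;\cong\; E\times_{\MM} X,
\]
where on the right $E\to\MM$ is the morphism $f$ now regarded as an ordinary morphism of stacks and $X\to\MM$ is the atlas $p$. To establish this I would unwind an $S$-point of the left-hand side: it consists of a map $S\to T$, a map $S\to X$, and an isomorphism in $\MM/G(S)$ between the pulled-back object $(S\times_T E\to S,\ \ldots)$ and the trivial-bundle object $(G\times S\to S,\ \mu\circ(\mathrm{id}_G\times p))$ produced by $\pi$, exactly as computed in the proof that $q$ has local sections. An isomorphism of principal $G$-bundles over $S$ with the trivial bundle is the same datum as a trivialising section, i.e. a lift $S\to E$ of $S\to T$; evaluating the accompanying $2$-morphism at this section converts the compatibility datum into precisely an isomorphism in $\MM(S)$ between the image of $S\to E\to\MM$ and that of $S\to X\xrightarrow{p}\MM$. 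This is exactly the data of an $S$-point of $E\times_\MM X$, and the correspondence is functorial in $S$, yielding the asserted isomorphism of groupoid-valued functors.

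Granting the identification, representability of $\pi$ is immediate: $E$ is a smooth manifold and $p\colon X\to\MM$ is representable, so $E\times_\MM X$ is representable by a smooth manifold, and hence so is $T\times_{\MM/G}X$ for every $T\to\MM/G$. For local sections I would factor the induced projection as
\[
 E\times_\MM X \longrightarrow E \xrightarrow{\,r\,} T .
\]
The first arrow is the base change of $p\colon X\to\MM$ along $E\to\MM$, and since local sections are stable under base change it inherits local sections from the atlas $p$; the second arrow is a principal $G$-bundle, hence a surjective submersion. A composite of surjective submersions admitting local sections again admits local sections, so $\pi$ has local sections. Together with representability this shows that $\pi=q\circ p$ is an atlas, and therefore that $\MM/G$ is a differentiable stack.

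The step I expect to be the main obstacle is the fibre-product identification $T\times_{\MM/G}X\cong E\times_\MM X$: one must carefully track the $2$-categorical data, namely isomorphisms of principal $G$-bundles together with their compatible $2$-morphisms into $\MM$, and verify that passing to a trivialising section is a genuine equivalence of groupoids natural in $S$, and not merely a bijection on isomorphism classes. If handling a general, possibly non-trivial, bundle $E$ directly proves awkward, a safe fallback is to choose a local trivialisation $\{U_i\to T\}$ of $E$, carry out the identification over each $U_i$ where the bundle is $G\times U_i$, and then glue the resulting manifolds and sections using the stack property of $\MM/G$.
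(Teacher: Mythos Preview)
Your argument is correct, and in fact cleaner than the paper's. The paper does not attempt the global identification $T\times_{\MM/G}X\cong E\times_\MM X$; instead it goes directly to what you describe as your ``fallback'': it chooses a cover $\{U_i\to T\}$ trivialising the bundle arising from $T\to\MM/G$ (these are the local sections of $q$), refines to $\{U_{ij}\to U_i\}$ along which $p$ has sections, observes that each $U_{ij}\times_\MM X$ is a manifold because $p$ is representable, and then glues to conclude that $T\times_{\MM/G}X$ is a manifold. Local sections of $q\circ p$ are dispatched in one line by citing that $p$ and $q$ separately have local sections.

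Your route buys an explicit, functorial description of the fibre product over an arbitrary $T$, which is informative in its own right and makes both representability and the existence of local sections fall out without any gluing. The paper's route is shorter to write down but leaves the global structure of $T\times_{\MM/G}X$ implicit. The one place you should be careful, as you yourself flag, is checking that restricting the equivariant $2$-morphism to the identity section really gives an equivalence of groupoids and not just a bijection on objects; since both $T\times_{\MM/G}X(S)$ and $E\times_\MM X(S)$ are discrete (the outer terms $T,X,E$ being manifolds), this reduces to a bijection of sets and your description suffices.
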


\begin{proof}
We observe that the morphisms $p$ and  $q$ have local sections, so it remains to check that $q\circ p$ is representable.
We consider the coverings $\{ U_i \rightarrow T \}$ and $\{ U_{ij}\rightarrow U_i \}$ such that the first one is a local section for $q$ and the second one a local section for $p$. Hence we get the following commutative diagram

\begin{center}

\begin{tikzcd}

& & U_{ij}\times_\mathcal{M} X \arrow{r} \arrow{d} & X \arrow{d}{p}\\
& & U_i\times_{\mathcal{M}/G}\mathcal{M}  \arrow{r}\arrow{d} & \mathcal{M} \arrow[d, "q"] \\
U_{ij} \arrow{rruu} \arrow{r}& U_i \arrow{ru} \arrow{r} & T\arrow{r} & \mathcal{M}/G
\end{tikzcd}

\end{center}
Thus if we glue every $U_{ij}\times_\mathcal{M} X$ using this local section, we get a smooth manifold and as the diagram commutes we see that $T\times_{\mathcal{M}/G} X$ is also a smooth manifold and from this we can conclude that the quotient stack $\mathcal{M}/G$ is in fact a differentiable stack. \qedhere
\end{proof}

As the morphism $q: \mathcal{M}\rightarrow  \mathcal{M}/ G$ is in fact a principal $G$-bundle (see \cite[4.2]{ginot2012group}), there exist an associated classifying morphism 
$u:\mathcal{M}/ G\rightarrow \mathcal{B}G$ to the classifying stack $\mathcal{B}G$ and we obtain the following $2$-cartesian square
\begin{center}
 \begin{tikzcd}
\mathcal{M} \arrow{r} \arrow{d}{q}
& *  \arrow{d} & \\
\mathcal{M}/G  \arrow{r}{u}
& \mathcal{B}G &
\end{tikzcd}
\end{center}

The canonical morphism of differentiable stacks $q: \mathcal{M}\rightarrow  \mathcal{M}/ G$ is actually the universal principal $G$-bundle over $\mathcal{M}/ G$. 

Let us mention finally that there is also a dual notion of a fixed point stack $\MM^G$, providing stacky fixed points for general Lie group actions on differentiable stacks and which is related to a homotopy fixed point space $\Hom_G(EG, \MM)$. In the context of group scheme actions these were also studied by Romagny \cite{romagny2003group} for the construction of certain moduli stacks with symmetries. We aim to explore these notions systematically in the context of differentiable $G$-stacks in a follow-up article. 

\section{Cohomology of differentiable stacks}

\noindent We will now discuss several cohomology theories for differentiable stacks, namely de Rham cohomology, sheaf cohomology and hypercohomology. As general references for cohomology of stacks, in particular de Rham and sheaf cohomology, we refer to \cite{behrend2004cohomology}, \cite{behrend2011differentiable} and \cite{heinloth2005notes}.

\subsection{De Rham cohomology}\label{DRCohSubsection}

Let $\MM$ be a differentiable stack with an atlas $X\rightarrow \MM$. We consider the nerve $X_\bullet$ of its associated Lie groupoid $(X\times_\MM X \rightrightarrows X)$ and the de Rham complex
\[ \left(\Omega^{\ast}_{dR}(X_\bullet), D= d_{dR}+ \partial \right), \] 
where $\partial$ is the differential operator defined via pullbacks of face maps in the simplicial structure of the nerve of $(X\times_\MM X \rightrightarrows X)$ and $d_{dR}$ is the exterior derivative of the de Rham complex for smooth manifolds.
Where we are considering
$$\Omega^{n}_{dR}(X_\bullet)=\displaystyle\bigoplus_{p+q=n} \Omega_{dR}^q(X_p)$$
with $\Omega_{dR}^q(X_p)$ denoting the sheaf of differential $q$-forms on the smooth manifold $X_p$.

\begin{dfn}
The cohomology given by the complex $(\Omega^{\ast}_{dR}(X_\bullet), D )$ is called the \textit{de Rham cohomology of $X_\bullet$}, and is denoted by $H^\ast_{dR}(X_\bullet)$.
\end{dfn}

This cohomology is invariant under Morita equivalence of Lie groupoids (see \cite{behrend2004cohomology}). Thus, it gives a well-defined algebraic invariant for differentiable stacks and we can define using an atlas

\begin{dfn}
Let $\MM$ be a differentiable stack with atlas $X\rightarrow \MM$. The \textit{de Rham cohomology} of $\MM$ is defined as
\[ H^\ast_{dR}(\mathcal{M}):= H^\ast_{dR}(X_\bullet). \] 
\end{dfn}

It can be shown that the de Rham cohomology of $X_\bullet$ is isomorphic to the singular cohomology of its fat geometric realisation, where the \textit{fat geometric realisation} is given as the quotient space
$$ \Vert X_\bullet \Vert=  \Vert p \mapsto X_p \Vert  = \bigcup_{p\in \mathbb{N}} \Delta^p \times X_p / \sim $$
modulo the identifications $(\partial^i t, x)\sim (t, \partial_i x)$ for any $x\in X_p$, $t\in \Delta^{p-1}, i, j=0, \ldots, n$ and $p$ (compare \cite[2.8]{dupont1976simplicial}). 

There is also a de Rham theorem for differentiable stacks (see \cite{behrend2004cohomology}), namely for a given differentiable stack $\MM$, we have an isomorphism between its de Rham cohomology and its singular cohomology with real coefficients i.e. $$H^\ast_{dR}(\mathcal{M})\cong H^\ast_{sing}(\mathcal{M}, \R).$$

Furthermore we can define a homotopy type for a differentiable stack

\begin{dfn}\label{HomotopyTypeStacks}
Let $\MM$ be a differentiable stack with atlas $X\rightarrow \MM$. 
The {\it homotopy type} of the differentiable stack $\MM$ is given by the homotopy type of the fat geometric realisation $|| X_\bullet ||$ of the nerve $X_\bullet$ of the associated Lie groupoid $(X\times_\MM X \rightrightarrows X)$.
\end{dfn}

The homotopy type of a differentiable stack does not depend on the choice of atlas and therefore is again well-defined (compare \cite {noohi2012homotopy}).
 
\subsection{Sheaf cohomology and hypercohomology}

We will now consider sheaf cohomology and hypercohomology of differentiable stacks, which again rely on the simplicial constructions. We refer to \cite{heinloth2005notes}, \cite{felder2008gerbe} and \cite{behrend2011differentiable} for sheaves and complexes of sheaves on differentiable stacks and their cohomologies. 

\begin{dfn} \label{Sheaf}

A \textit{sheaf} $\mathfrak{F}$ \textit{(of abelian groups)} \textit{on a differentiable stack} $\mathcal{M}$ is a collection of sheaves $\mathfrak{F}_{X\rightarrow \mathcal{M}}$ (of abelian groups) for any morphism $X\rightarrow \mathcal{M}$, where $X$ is a smooth manifold such that for every triangle 

\begin{center}
\begin{tikzcd}
X \arrow{rr}{f}
\arrow[rd, ""{name=U}]{}[swap]{h} 
&& Y \arrow{dl}{g}[swap,""{name=V}]{} 
\arrow[Rightarrow, from=U, to=V, "\phi"]\\
&\mathcal{M}
\end{tikzcd}
\end{center}
there is a morphism of sheaves $\Phi_{\phi, f} : f^*\mathfrak{F}_{Y\rightarrow \mathcal{M}} \rightarrow\mathfrak{F}_{X\rightarrow \mathcal{M}}$ such that for

\begin{center}
\begin{tikzcd}
X \arrow{r}{f}
\arrow[rd, ""{name=U}]{}[swap]{}
&Y \arrow[d, ""{name=W}]{}[swap,""{name=V}]{}
\arrow{r}{g}
&Z \arrow{ld}{}[swap,""{name=X} ]{}
\arrow[Rightarrow, from=U, to=V, "\phi"]
\arrow[Rightarrow, from=W, to=X, "\psi"]\\
&\mathcal{M}
\end{tikzcd}
\end{center}
we have $\Phi_{\phi, f} \circ f^* \Phi_{\psi, g} = \Phi_{\phi \circ f^*\psi, f\circ g}$. The sheaf $\mathfrak{F}$ is called \textit{cartesian} if the $\Phi_{\phi, f}$ are also isomorphisms.
\end{dfn}

Let $\mathfrak{F}$  be a sheaf of abelian groups on a differentiable stack $\mathcal{M}$. We consider an injective resolution $0\rightarrow \mathfrak{F} \rightarrow K^{q}$ and the double complex of global sections 
\[ N^{\bullet,\bullet} = \Gamma(X_{\bullet}, K^{\bullet})  \]
with differentials given by the induced simplicial differential $\partial$ and the resolution differential $d_K$. This cohomology is the \textit{sheaf cohomology} of $\MM$ and will be denoted by $H^*(\mathcal{M}, \mathfrak{F})$.

The same construction as above can be done more generally also for a complex $\mathfrak{F}^\bullet$ of sheaves of abelian groups on the differentiable stack $\MM$ with atlas $X\rightarrow \MM$ (see \cite{behrend2004cohomology}, \cite{heinloth2005notes} or    
 \cite{felder2008gerbe}). The result is the \textit{hypercohomology of} $\MM$ \textit{with coefficients} $\mathfrak{F}^\bullet$ and will be denoted by
\[ H^\ast(\MM,\mathfrak{F}_0 \rightarrow \mathfrak{F}_1 \rightarrow \cdots \rightarrow \mathfrak{F}_m ) = H^\ast(\MM,\mathfrak{F}^\bullet) = H^\ast(X_\bullet,\mathfrak{F}^\bullet) . \]

\begin{rem}\label{ComplexSimplicialCohomology}
If we have a complex $\mathfrak{F}^\bullet$ of cartesian sheaves, this cohomology is isomorphic to the cohomology of the homotopy type of the fat geometric realisation $||X_\bullet||$ (compare with \cite[3.4.27]{deligne1974theorie}). 
\end{rem}

\section{Models for equivariant cohomology of differentiable stacks} \label{CohQSt}

\noindent In this section we will now describe the Borel, Cartan and Getzler models for equivariant cohomology of differentiable stacks with Lie group actions and study some of their main properties. 

\subsection{The homotopy type of the differentiable stack \texorpdfstring{$\MM/G$}{M/G} and the Borel model for differentiable \texorpdfstring{$G$}{G}-stacks}
Let $G$ be a Lie group and $\MM$ a differentiable $G$-stack with a $G$-atlas $X\xrightarrow{p} M$. We denote the action on $\MM$ by $G$ with $\mu:G\times \MM \rightarrow \MM$ and the action on $X$ by $G$ with $\sigma:G\times X \rightarrow X$. Then by proposition \ref{AtlasQuotientStack}, we obtain an atlas for the quotient stack $X\xrightarrow{p}\MM\xrightarrow{q}\MM/G$. We recall that the homotopy type for the quotient stack $\MM/G$ is given by the fat geometric realisation of the nerve of the associated Lie groupoid $(X\times_{\MM/G}X\rightrightarrows X)$ (compare also with \cite{behrend2004cohomology}).

For the rest of this section we will devote our efforts to determine the homotopy type of the differentiable stack $\MM/G$. We will utilise the following result by Ginot-Noohi (see \cite[4.1]{ginot2012group}) describing the $2$-fiber product of the morphism $q$.

\begin{prop}
The following diagram is a $2$-commutative diagram 
\begin{center}
 \begin{tikzcd}
G\times \MM \arrow{r}{\mu} \arrow{d}{pr_2}
& \MM \arrow{d}{q} \\
\MM \arrow{r}{q} 
& \MM/G
\end{tikzcd}
\end{center}
and the functor $(pr_2, \mu): G\times \MM \rightarrow \MM \times_{\MM/G} \MM$ is an equivalence of stacks.

\end{prop}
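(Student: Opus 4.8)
The plan is to verify both assertions directly from the explicit description of the projection $q$, recalling that $q$ sends an object $x\in\MM(T)$, viewed as $T\xrightarrow{x}\MM$, to the trivial principal $G$-bundle $G\times T\xrightarrow{pr_2}T$ equipped with the equivariant map $\mu\circ(id_G\times x)$. For the $2$-commutativity of the square, take an object $(g,x)$ of $(G\times\MM)(T)$ with $g\colon T\to G$ and $x\in\MM(T)$. Both composites produce the trivial bundle $G\times T\to T$: the composite $q\circ pr_2$ carries the equivariant map $(g',t)\mapsto g'\cdot x(t)$, while $q\circ\mu$ carries $(g',t)\mapsto g'\cdot(g(t)\cdot x(t))$. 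A right translation $u\colon G\times T\to G\times T$ determined by $g$ is an isomorphism of trivial $G$-bundles over $T$, and the associator $\alpha$ of the action supplies the $2$-cell $g'\cdot(g(t)\cdot x(t))\Rightarrow (g'g(t))\cdot x(t)$; together they give an isomorphism $\theta_{g,x}\colon q(x)\xrightarrow{\sim}q(g\cdot x)$ in $\MM/G(T)$, natural in $(g,x)$. This furnishes the $2$-isomorphism $q\circ pr_2\cong q\circ\mu$ and simultaneously shows $(pr_2,\mu)(g,x)=(x,\,g\cdot x,\,\theta_{g,x})$.

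Next I would unwind the $2$-fibre product. An object of $(\MM\times_{\MM/G}\MM)(T)$ is a triple $(a,b,\theta)$ with $a,b\in\MM(T)$ and $\theta\colon q(a)\xrightarrow{\sim}q(b)$ an isomorphism in $\MM/G(T)$. The key step is to identify such a $\theta$: its underlying morphism of principal bundles is an automorphism of the trivial bundle $G\times T\to T$, hence a right translation by a unique smooth map $h\colon T\to G$, while the accompanying equivariance $2$-cell reduces, upon restriction to $g'=e$, to an isomorphism $\varphi\colon h\cdot a\xrightarrow{\sim}b$ in $\MM(T)$; conversely each pair $(h,\varphi)$ determines a $\theta$. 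I would extract this description from the prestack presentation $(\MM/G)^\ast\cong\MM/G$ established above, where morphisms between trivial bundles were already computed to be right translations. Thus objects of the fibre product are equivalently quadruples $(a,b,h,\varphi)$ with $\varphi\colon h\cdot a\to b$, and under this dictionary the image $(pr_2,\mu)(h,a)$ is $(a,\,h\cdot a,\,h,\,id)$.

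Essential surjectivity is then immediate: the quadruple $(a,b,h,\varphi)$ is isomorphic, via $(id_a,\varphi)$, to $(a,\,h\cdot a,\,h,\,id)=(pr_2,\mu)(h,a)$. For fully faithfulness, a morphism $(g,x)\to(g',x')$ in $(G\times\MM)(T)$ consists of a morphism $x\to x'$ in $\MM(T)$ together with an arrow $g\to g'$ in the discrete groupoid $\underline G(T)$, hence is empty unless $g=g'$ and otherwise equals $\Hom_{\MM(T)}(x,x')$. On the target, a morphism $(x,g\cdot x,\theta_{g,x})\to(x',g'\cdot x',\theta_{g',x'})$ is a pair $(a\colon x\to x',\,b\colon g\cdot x\to g'\cdot x')$ compatible with the $\theta$'s; since $q$ acts as the identity on the underlying bundles of its morphisms, comparing the underlying bundle maps of $\theta_{g',x'}\circ q(a)$ and $q(b)\circ\theta_{g,x}$ forces the two right translations to coincide, so $g=g'$, and compatibility of the $2$-cells then determines $b=g\cdot a$ from $a$. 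Hence the induced map on hom-sets is a bijection, and $(pr_2,\mu)$ is fully faithful and essentially surjective, i.e. an equivalence of stacks.

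The main obstacle is the middle step: showing that an isomorphism in $\MM/G(T)$ between the trivial bundles $q(a)$ and $q(b)$ is faithfully encoded by the pair $(h,\varphi)$. This requires carefully tracking how the bundle automorphism $u$ interacts with the equivariance $2$-cell, and verifying that the coherence conditions $(1)$--$(2)$ for the action $2$-morphisms $\alpha,\beta$ are precisely what make $\theta_{g,x}$ well defined, natural in $(g,x)$, and compatible under composition. Once this identification is secured, both essential surjectivity and fully faithfulness follow formally.
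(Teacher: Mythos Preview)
The paper does not supply its own proof of this proposition: it is quoted as a result of Ginot--Noohi (see the sentence immediately preceding the statement, which cites \cite[4.1]{ginot2012group}). Your proposal therefore cannot be compared against a proof in the paper, but it does provide a correct direct verification that the paper omits.

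Your argument is sound and, in fact, is the natural one suggested by the paper's own earlier computations. The crucial middle step---identifying an isomorphism $q(a)\xrightarrow{\sim}q(b)$ in $\MM/G(T)$ with a pair $(h,\varphi)$ where $h\colon T\to G$ and $\varphi\colon h\cdot a\to b$---is exactly what was established in the proof of Proposition~2.4 (the isomorphism $(\MM/G)^\ast\cong\MM/G$), where the morphisms in $\Hom_{\MM/G(T)}(\Phi(x),\Phi(y))$ between trivial bundles were shown to be right translations paired with arrows $g\cdot x\to y$. Once that dictionary is in place, your essential-surjectivity argument via $(id_a,\varphi)\colon(a,b,h,\varphi)\to(a,h\cdot a,h,id)$ and your fully-faithfulness argument (forcing $g=g'$ by comparing the underlying bundle automorphisms, then $b=g\cdot a$ from the $2$-cell compatibility) are both correct and essentially forced. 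One small point to make explicit in writing up: the coherence axioms (1) and (2) for $\alpha,\beta$ are precisely what guarantee that your $\theta_{g,x}$ is natural and compatible with composition, as you note in your final paragraph; it would strengthen the exposition to spell out at least one of these verifications rather than only flag it as the main obstacle.
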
 

Therefore, we can consider the following $2$-commutative diagram, which is a modification of the one above, using the fact that there is also an induced action of the Lie group $G$ on the atlas $X$ of the stack $\MM$:

\begin{center}

\begin{tikzcd}
E \arrow[r] \arrow[d, "\mu_1 "]
& G\times X \arrow[r, "\sigma"] \arrow[d," id_G \times p "] & X \arrow[d,"p"]\\
 G\times X \arrow[r, "id_G\times p" ] \arrow[d, "pr_2"]
& G\times \mathcal{M}  \arrow[r, "\mu "]\arrow[d, "pr_2"] & \mathcal{M} \arrow[d, "q"] \\
X\arrow[r, "p"] & \mathcal{M}\arrow[r, "q"] & \mathcal{M}/G
\end{tikzcd}

\end{center} 

From this we can now conclude the following crucial property

\begin{prop}
There is an equivalence of stacks given by
$$ X\times_{\MM/G}X \cong (G\times X)\times_\MM X \cong G\times (X\times_\MM X) $$
\end{prop}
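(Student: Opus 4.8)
The plan is to establish the equivalence in two stages, reading the chain of isomorphisms from right to left and exploiting the large $2$-commutative diagram immediately preceding the statement. First I would prove the rightmost equivalence $G\times(X\times_\MM X)\cong(G\times X)\times_\MM X$. This is essentially a formal manipulation of $2$-fiber products: writing $X\times_\MM X$ as the fiber product of the two copies of $p\colon X\to\MM$, I would observe that $(G\times X)\times_\MM X$ uses the composite morphism $G\times X\xrightarrow{id_G\times p}G\times\MM\xrightarrow{pr_2}\MM$ — that is, the morphism $G\times X\to\MM$ obtained by projecting away the $G$-factor and then applying $p$. Since this morphism factors through the projection $pr_2\colon G\times X\to X$ followed by $p$, and since $2$-fiber products commute with products in a factor that does not interact with the fibering map, the $G$-factor simply splits off. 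Concretely, an object of $(G\times X)\times_\MM X$ over $T$ is a tuple consisting of $g\in G(T)$, a point $x_1\in X(T)$, a point $x_2\in X(T)$, and an isomorphism $p(x_1)\cong p(x_2)$ in $\MM(T)$; forgetting nothing, this is exactly the datum of $g$ together with an object of $X\times_\MM X$, which is $G\times(X\times_\MM X)$.

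Next I would prove the leftmost equivalence $X\times_{\MM/G}X\cong(G\times X)\times_\MM X$, which is where the real content lies. Here I would invoke the Ginot–Noohi result quoted just above, namely that $(pr_2,\mu)\colon G\times\MM\to\MM\times_{\MM/G}\MM$ is an equivalence of stacks. The strategy is to pull this equivalence back along the atlas $p\colon X\to\MM$ on both factors. More precisely, the object $X\times_{\MM/G}X$ is the fiber product of $X\xrightarrow{q\circ p}\MM/G\xleftarrow{q\circ p}X$, and I would factor this as a composite of fiber products: first forming $\MM\times_{\MM/G}\MM$ and then restricting along the two atlas maps, so that
\begin{equation*}
X\times_{\MM/G}X\;\cong\;(X\times_\MM\MM)\times_{(\MM\times_{\MM/G}\MM)}(\MM\times_\MM X).
\end{equation*}
Substituting the Ginot–Noohi equivalence $\MM\times_{\MM/G}\MM\simeq G\times\MM$ (under which the two projections to $\MM$ become $pr_2$ and $\mu$) and simplifying the outer fiber products yields $X\times_{\MM/G}X\cong X\times_{\MM,\,pr_2}(G\times\MM)\times_{\mu,\,\MM}X$. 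Unwinding this, an object over $T$ consists of $x_1\in X(T)$, an element $g\in G(T)$, a point $m\in\MM(T)$ with isomorphisms $p(x_1)\cong m$ (from the $pr_2$ leg) and $g\cdot m\cong p(x_2)$ for some $x_2\in X(T)$ (from the $\mu$ leg). Eliminating the redundant $m$ via $p(x_1)\cong m$, this becomes the datum of $g$, $x_1$, $x_2$, and an isomorphism $g\cdot p(x_1)\cong p(x_2)$, which — using that $p$ is a $1$-morphism of $G$-stacks so that $g\cdot p(x_1)\cong p(g\cdot x_1)$ — is exactly an object of $(G\times X)\times_\MM X$.

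The main obstacle I anticipate is \emph{bookkeeping of the $2$-isomorphisms} rather than any deep structural difficulty: all three stacks are manifestly equivalent at the level of objects and morphisms by the above unwindings, but to make the argument rigorous one must track the coherence $2$-cells carefully, especially the compatibility isomorphisms $\sigma_g^x\colon p(g\cdot x)\cong g\cdot p(x)$ coming from the $G$-atlas structure of $p$, and verify that the equivalences are natural in $T$ and hence assemble into morphisms of stacks. A clean way to organise this is to read the big $2$-commutative diagram preceding the statement as exhibiting precisely these identifications: the upper-left object $E$ in that diagram is the iterated $2$-pullback $X\times_{\MM/G}X$, its left and top maps $\mu_1$ and the composite to $X$ realise the two projections, and commutativity of the diagram encodes exactly the chain of isomorphisms claimed. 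I would therefore present the proof as verifying that $E$, computed as the total $2$-pullback of the outer square, agrees with each of the three descriptions, with the Ginot–Noohi equivalence supplying the only non-formal input and the $G$-atlas condition on $p$ supplying the identification $g\cdot p(-)\cong p(g\cdot-)$ needed to match the $\MM$-fibering on $G\times X$.
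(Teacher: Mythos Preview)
There is a real inconsistency in your treatment of the middle term. In the first paragraph you assert that the structure morphism $G\times X\to\MM$ used to form $(G\times X)\times_\MM X$ is $pr_2\circ(id_G\times p)$, i.e.\ ``project off $G$ and then apply $p$'', and on that basis you argue that the $G$-factor splits off trivially, giving the rightmost equivalence for free. But this is not the map in play: reading the $2$-commutative diagram just above the statement, the morphism $G\times X\to\MM$ is $\mu\circ(id_G\times p)$ (equivalently $p\circ\sigma$), so that an object of $(G\times X)\times_\MM X$ over $T$ is a tuple $(g,x,y)$ together with an isomorphism $g\cdot p(x)\Rightarrow p(y)$ --- exactly the description you yourself arrive at later, at the end of your second paragraph. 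With the correct map the $G$-factor does \emph{not} split off formally: one must use the $G$-action $\sigma$ on the atlas $X$ (not merely on $\MM$) to untwist, sending $(g,x,y;\,g\cdot p(x)\Rightarrow p(y))$ to $\bigl(g,(x,\,g^{-1}\cdot y;\,p(x)\Rightarrow p(g^{-1}\cdot y))\bigr)$, where the new isomorphism is produced by acting by $g^{-1}$ and invoking the $G$-atlas $2$-cell $g^{-1}\cdot p(y)\cong p(g^{-1}\cdot y)$. This is exactly the paper's argument, and it genuinely uses that $p$ is a morphism of $G$-stacks, not just a representable submersion.

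Your plan for the leftmost equivalence via the Ginot--Noohi identification $\MM\times_{\MM/G}\MM\simeq G\times\MM$ is correct and matches the paper's appeal to the diagram. The fix is simply to carry the description of $(G\times X)\times_\MM X$ that you obtain in your second paragraph back to the first, drop the ``factors through $pr_2$'' claim, and write down the explicit inverse using $\sigma$ on $X$ as above.
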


\begin{proof}
We obtain the first equivalence thanks to the diagram above, so we have 
$$X\times_{\MM/G}X \cong (G\times X)\times_\MM X.$$
To show the equivalence $(G\times X)\times_\MM X \cong G\times (X\times_\MM X) $, we consider any $T$ in $\Diff$ and we observe that any element in $(G\times X)\times_\MM X(T)$ is of the form $(g\times x, y ; g\cdot p(x)\Rightarrow p(y))$, where $x, y\in X(T)$ and $g\in G$. In the same way, an element in $G\times (X\times_\MM X)(T)$ has the form $(g, (x,y; p(x)\Rightarrow p(y)))$. Now we define the morphism
$$ \eta_T:(G\times X)\times_\MM X(T)\rightarrow G\times (X\times_\MM X)(T) $$
with $\eta_T (g\times x, y ; g\cdot p(x)\Rightarrow p(y)) = (g, (x,g^{-1}\cdot y; p(x)\Rightarrow p(g^{-1}\cdot y))) $, where $g^{-1}\cdot y = \sigma(g^{-1},y)$ and the morphism
$$ \xi_T: G\times (X\times_\MM X)(T) \rightarrow (G\times X)\times_\MM X(T)$$
with $\xi_T (g, (x,y; p(x)\Rightarrow p(y)))=(g \times x, g\cdot y; g\cdot p(x)\Rightarrow p(g\cdot y))$.
Finally, we get that $\eta_T \circ \xi_T = id_{G\times (X\times_\MM X)(T)}$ and $\xi_T \circ \eta_T = id_{(G\times X)\times_\MM X(T)} $, as we desired.\qedhere
\end{proof}

Iterating the above equivalence of stacks, we now get for the $(n+1)$-fold fibred product 
$$X\times_{\MM/G}X\times_{\MM/G}\ldots \times_{\MM/G} X \cong G^{n}\times X_{n},$$
where $G^{n}$ is the $n$-fold cartesian product of $G$ and $X_{n}= X\times_{\MM}X\times_{\MM}\ldots \times_{\MM} X$ is the $(n+1)$-fold fibred product.

If we consider again any manifold $T$ in $\Diff$ and the face maps in the nerve associated to $X \xrightarrow{q\circ p} \mathcal{M}/G$,  we obtain the face maps for the simplicial smooth manifold $\{G^n\times X_{n}\}_{n\geq 0}$
$$ \partial_i:G^{n+1}\times X_{n+1}(T) \rightarrow G^{n}\times X_{n}(T) $$
such that 
$$ \partial_i (g_{1},g_2,\ldots , g_{n+1}, (x_1, x_{2},\ldots , x_{n+2} ; p(x_{1})\Rightarrow\ldots\Rightarrow p(x_{n+2}) ))$$
is equal to
$$ (g_{2},g_3,\ldots , g_{n+1}, \pi_1 (x_1, x_{2},\ldots , x_{n+2} ; p(x_{1})\Rightarrow\ldots\Rightarrow p(x_{n+2}) ) ) \text{ if $i=0$, }$$
$$ (g_{1},\ldots , g_i\cdot g_{i+1},\ldots , g_{n+1}, \pi_i(x_1, x_{2},\ldots , x_{n+2} ; p(x_{1})\Rightarrow\ldots\Rightarrow p(x_{n+2}) )) \text{ if $0 < i < n$, }$$
$$ (g_{1},g_2,\ldots , g_{n}, g_{n+1}\cdot \pi_{n+2} (x_1, x_{2},\ldots , x_{n+2} ; p(x_{1})\Rightarrow\ldots\Rightarrow p(x_{n+2}) ) ) \text{ if $i=n$, }$$
and where $\pi_i:X_{n+1}\rightarrow X_{n} $ is the $i$-th face map of the nerve of the simplicial smooth manifold $X_\bullet$ and $g_{n+1}\cdot \pi_{n+2}$ is the action induced by $\sigma$ on $X_{n}$. Now we can state a fundamental theorem

\begin{teo}\label{TeoCohomologyQuotientStack}
Let $G$ be a Lie group and $\MM$ a differentiable $G$-stack with $G$-atlas $X\rightarrow \MM$ then 
\[ H^\ast(\MM/G, \R) \cong H(EG\times_G \Vert X_\bullet\Vert, \R), \]
where $||X_\bullet||$ is the fat geometric realisation of the associated simplicial smooth manifold $X_\bullet$.
\end{teo}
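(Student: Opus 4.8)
The plan is to reduce the statement to a comparison of fat geometric realisations of bisimplicial smooth manifolds. By Definition \ref{HomotopyTypeStacks} the cohomology $H^\ast(\MM/G,\R)$ is the singular cohomology of the fat geometric realisation of the nerve of the associated Lie groupoid $(X\times_{\MM/G}X\rightrightarrows X)$. The equivalences of stacks established just above identify the $(n+1)$-fold fibred product $X\times_{\MM/G}\cdots\times_{\MM/G}X$ with $G^n\times X_n$, and the face maps of this nerve were computed explicitly preceding the statement. Thus the first step is to record that $H^\ast(\MM/G,\R)\cong H^\ast_{sing}(\Vert\{G^n\times X_n\}\Vert,\R)$, where $\{G^n\times X_n\}_{n\ge 0}$ carries exactly those face maps $\partial_i$.

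Next I would exhibit $\{G^n\times X_n\}$ as the diagonal of a bisimplicial smooth manifold. Consider the bisimplicial object $([p],[q])\mapsto G^p\times X_q$, where in the first variable $G^\bullet$ is the bar construction (the nerve of the one-object groupoid $BG$, with faces given by group multiplication and by the simplicial $G$-action $\sigma_\bullet$ on $X_\bullet$ from Proposition \ref{ActionInducedSimpManifold}), while in the second variable $X_\bullet$ is the nerve of the Lie groupoid of $\MM$. Comparing with the explicit formulas, the $i=0$ face drops $g_1$ and applies $\pi_1$, the middle faces multiply $g_ig_{i+1}$ and apply $\pi_i$, and the $i=n$ face applies $g_{n+1}\cdot\pi_{n+2}$ through the action; these are precisely the faces of the diagonal of $G^\bullet\times X_\bullet$. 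Hence $\{G^n\times X_n\}=\mathrm{diag}(G^\bullet\times X_\bullet)$.

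The core of the argument is then the bisimplicial realisation lemma: for a bisimplicial space $Z_{\bullet,\bullet}$ the fat realisation of the diagonal is naturally homotopy equivalent to the iterated fat realisation, $\Vert\mathrm{diag}\,Z\Vert\simeq\Vert[p]\mapsto\Vert[q]\mapsto Z_{p,q}\Vert\,\Vert$. Applying this to $Z_{p,q}=G^p\times X_q$ and realising first in the $q$-direction gives $\Vert[p]\mapsto G^p\times\Vert X_\bullet\Vert\,\Vert$, since fat realisation commutes with the product by the constant factor $G^p$. This last simplicial space is the two-sided bar construction $B(\ast,G,\Vert X_\bullet\Vert)$, whose fat realisation is the standard model for the Borel homotopy quotient $EG\times_G\Vert X_\bullet\Vert$, the $G$-action on $\Vert X_\bullet\Vert$ being the one induced by $\sigma_\bullet$. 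Chaining these equivalences yields $\Vert\{G^n\times X_n\}\Vert\simeq EG\times_G\Vert X_\bullet\Vert$, and passing to singular cohomology with real coefficients gives the claimed isomorphism.

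I expect the main obstacle to be the bisimplicial realisation lemma together with the face-map bookkeeping it requires. The homotopy invariance and good behaviour of the fat realisation (which is exactly why the fat, rather than thin, realisation is used throughout) is what lets one pass freely between the diagonal and the iterated realisation without cofibrancy or degeneracy hypotheses; the delicate point is verifying that the bar-construction faces in the $G$-direction, twisted by the action $\sigma_\bullet$ on the nerve $X_\bullet$, reassemble on the diagonal into exactly the face maps $\partial_i$ listed before the theorem, so that $\mathrm{diag}(G^\bullet\times X_\bullet)$ really is the nerve of the Lie groupoid presenting $\MM/G$.
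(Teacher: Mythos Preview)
Your proposal is correct and follows essentially the same route as the paper: both introduce the bisimplicial smooth manifold $Z_{p,n}=G^p\times X_n$, identify the nerve of the Lie groupoid presenting $\MM/G$ with its diagonal, and then invoke the equivalence between the fat realisation of the diagonal and the iterated fat realisation to obtain $EG\times_G\Vert X_\bullet\Vert$. You are in fact a bit more explicit than the paper about why the diagonal face maps match the $\partial_i$ computed earlier and about the identification of the inner realisation with the bar construction, but the argument is the same.
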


\begin{proof}
We consider the bisimplicial smooth manifold $Z_{\bullet,\bullet}$ given by $Z_{p,n}= G^p \times X_n $, where $G^p$ is the $p$-fold cartesian product of $G$ and $X_{n}= X\times_{\MM}X\times_{\MM}\ldots \times_{\MM} X$ is the $(n+1)$-fold fibred product of the $G$-atlas for $\MM$. We have $X_0=X$. The vertical face maps are given by the face maps in the simplicial manifold $X_\bullet$ and the horizontal face maps are given by 
\[\partial_0^H(g_1,\ldots,g_p, z)= (g_2,\ldots, g_p,z) \]
	\[\partial_i^H(g_1,\ldots,g_p, z)= (g_1,\ldots g_{i-1},g_ig_{i+1},g_{i+2},\ldots, g_p,z) \text{ for } 1\leq i < p \]
	\[ \partial_p^H(g_1,\ldots,g_p, z) = (g_1,\ldots, g_{p-1},g_p\cdot z)\]
where $z\in X_n$. We observe that the diagonal simplicial smooth manifold $dZ_{\bullet}$ associated to $Z_{\bullet,\bullet}$ given by $\{Z_{n,n}\}_{n\geq 0}$ is the simplicial smooth manifold given by $\{G^n\times X_{n}\}_{n\geq 0}$ that we discussed above. 
As the fat geometric realisation $\Vert Z_{\bullet, \bullet} \Vert$ of a bisimplicial smooth manifold $Z_{\bullet, \bullet}$ can be computed using the fat geometric realisation of the simplicial smooth manifold in the second index and then the one in the first index $\Vert p\mapsto \Vert n \mapsto Z_{p,n} \Vert \Vert$, we get that 
\[ \Vert Z_{\bullet, \bullet} \Vert = \Vert G^\bullet \times X_\bullet\Vert \cong \Vert p\mapsto \Vert n \mapsto G^p \times X_n \Vert \Vert \cong EG\times_G\Vert X_\bullet\Vert. \]
Alternatively, the fat geometric realisation $\Vert G^\bullet \times X_\bullet\Vert$ can also be computed via the fat geometric realisation of the diagonal simplicial smooth manifold $dZ_{\bullet}$ of the bisimplicial smooth manifold $Z_{\bullet, \bullet}$ (see also \cite{felisattineu2007sim, ebertrandall2019semi}), that is, 
\[ EG\times_G\Vert X_\bullet\Vert \cong \Vert n\mapsto Z_{n,n} \Vert \cong \Vert n\mapsto G^n \times X_{n} \Vert  \]
which implies the desired result.
\end{proof}

\begin{exm}
If the differentiable stack $\mathcal{M}$ is just a smooth manifold $X$, we see that the definition of equivariant cohomology for stacks coincides with the usual equivariant cohomology for smooth manifolds. 
Namely, since 
\[ (G\times X) \times_X X \cong G \times X\]
with the maps of the associated Lie groupoid $(G\times X\rightrightarrows X)$ given by the action map $\mu:G\times X \rightarrow X$ and the projection map $pr_2:G\times X \rightarrow X$ respectively, we observe that this Lie groupoid coincides with the transformation groupoid and the fat geometric realisation gives $EG\times_G X$. Therefore the de Rham cohomology of $[X/G]$ is simply given as 
$$H^\ast_{dR}([X/G])\cong H^\ast(EG\times_G X, \R),$$ 
which is the classical Borel model for equivariant cohomology.
\end{exm}

Now in the general situation we define in analogy with the classical case of $G$-manifolds
\begin{dfn}
Let $G$ be a Lie group and $\MM$ a differentiable $G$-stack with a $G$-atlas $X\xrightarrow{p} M$. The \textit{equivariant cohomology} $H_G^\ast(\MM, R)$ of $\MM$ is given by
$$ H_G^\ast(\MM, R) = H^\ast (\MM/G, R),$$
where $R$ is any commutative ring $R$ with unit.
\end{dfn}

\begin{rem}
By \cite[3.4.27]{deligne1974theorie}, this definition of equivariant cohomology in fact makes sense for any cartesian sheaf or complex of cartesian sheaves given on the quotient stack $\MM/G$. 
\end{rem}

\subsection{The Cartan model for differentiable \texorpdfstring{$G$}{G}-stacks}\label{CartanModelSubsection}
In this subsection, we use the Cartan model for simplicial smooth manifold as described in \cite{meinrenken2005witten}, \cite{suzuki2015simplicial} to obtain another description of the equivariant cohomology of a differentiable $G$-stack. We will assume in this section that $G$ is a {\it compact} Lie group.

Let $\MM$ be a differentiable $G$-stack with $G$-atlas $X\rightarrow \MM$ for a compact Lie group $G$. We denote the action on the atlas $X$ by $\sigma$ and the action on the stack $\MM$ by $\mu$. Then we can consider the simplicial smooth action $\sigma_\bullet$ induced by $\sigma$ on $X_\bullet$ as in Proposition \ref{ActionInducedSimpManifold}.

Let us now consider the associated complex $(C^{\bullet}, D - \iota)$ of simplicial equivariant forms given by
\[ C^{2p+m} = \bigoplus_{q+r=m} \left( S^{p}(\mathfrak{g}^\vee)\otimes \Omega_{dR}^q(X_r)^G \right), \]
where $\mathfrak{g}$ denotes the Lie algebra of $G$, with $D$ the differential defined for the de Rham complex as in Subsection \ref{DRCohSubsection} and $\iota$ is the interior multiplication by the fundamental vector field (compare \cite{kubel2015equivariant}, \cite{suzuki2015simplicial}). We consider its cohomology and denote it by $H_G^\ast(X_\bullet)$. We observe that
\begin{teo}\cite[4.2]{suzuki2016equivariant}, \cite[4.1]{suzuki2015simplicial}
The cohomology of the complex $C^\bullet$ is given by
\[ H_G^\ast (X_\bullet) \cong H^\ast(EG\times_G || X_\bullet ||, \mathbb{R}) \]
with $|| X_\bullet ||$ the fat geometric realisation of the simplicial smooth manifold $X_\bullet$.
\end{teo}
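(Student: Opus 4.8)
The plan is to realise $C^\bullet$ as the total complex of a triple complex and to filter by the simplicial degree, so that the first page of the resulting spectral sequence is computed by the classical Cartan theorem applied one simplicial level at a time. Write $C^{p,q,r}=S^p(\mathfrak{g}^\vee)\otimes\Omega_{dR}^q(X_r)^G$, so that the three constituents of $D-\iota$ have tridegrees $d_{dR}\colon(p,q,r)\mapsto(p,q+1,r)$, the simplicial coboundary $\partial\colon(p,q,r)\mapsto(p,q,r+1)$, and the Cartan contraction $-\iota\colon(p,q,r)\mapsto(p+1,q-1,r)$; each raises the total degree $2p+q+r$ by one, and together they make $(C^\bullet,D-\iota)$ the totalisation of a triple complex (the relation $(D-\iota)^2=0$ forces the restriction to $G$-invariant forms, where the Cartan magic formula gives $d_{dR}\iota+\iota d_{dR}=0$). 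First I would introduce the decreasing filtration $F^sC^\bullet=\bigoplus_{r\ge s}C^{p,q,r}$ by simplicial degree. Since only $\partial$ changes $r$, the associated $E_0$-differential is exactly $d_{dR}-\iota$, the ordinary Cartan differential on $S(\mathfrak{g}^\vee)\otimes\Omega_{dR}(X_r)^G$ for each fixed $r$.

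The key input is then the classical Cartan theorem: because $G$ is compact and each $X_r$ is an honest smooth $G$-manifold, the Cartan complex $(S(\mathfrak{g}^\vee)\otimes\Omega_{dR}(X_r)^G,\,d_{dR}-\iota)$ computes $H^\ast(EG\times_G X_r,\R)$. This identifies the first page as $E_1^{r,\ast}\cong H^\ast(EG\times_G X_r,\R)$, and the induced $d_1$-differential, coming from $\partial=\sum_i(-1)^i\pi_i^\ast$, is precisely the simplicial coboundary on the cosimplicial graded group $r\mapsto H^\ast(EG\times_G X_r,\R)$. Hence $E_2^{\ast,\ast}$ is the cohomology of this cosimplicial object. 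The filtration is first quadrant in the pair $(r,\,2p+q)$, so the spectral sequence converges to $H_G^\ast(X_\bullet)$.

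For the other side I would use the identification $EG\times_G\Vert X_\bullet\Vert\cong\Vert n\mapsto EG\times_G X_n\Vert$, equivalently $\Vert G^\bullet\times X_\bullet\Vert$, established in the proof of Theorem \ref{TeoCohomologyQuotientStack}. The fat realisation of the simplicial space $n\mapsto EG\times_G X_n$ carries its standard first-quadrant spectral sequence with $E_1^{r,\ast}=H^\ast(EG\times_G X_r,\R)$, with $d_1$ the simplicial coboundary, converging to $H^\ast(EG\times_G\Vert X_\bullet\Vert,\R)$. Thus the two spectral sequences agree from $E_1$ onward. To upgrade this coincidence of pages to an isomorphism of abutments I would produce an actual morphism of filtered complexes rather than merely compare pages, namely the simplicial Cartan map
\[ \Phi\colon\bigl(S(\mathfrak{g}^\vee)\otimes\Omega_{dR}(X_\bullet)^G,\,D-\iota\bigr)\longrightarrow\bigl(\Omega_{dR}(G^\bullet\times X_\bullet),\,d_{dR}+\partial_G+\partial_X\bigr), \]
assembled level-wise in $r$ from the classical Mathai--Quillen/Cartan comparison into the bar-type simplicial de Rham complex $p\mapsto\Omega_{dR}(G^p\times X_r)$ computing $H^\ast(EG\times_G X_r)$. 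By the de Rham theorem for simplicial manifolds of Subsection \ref{DRCohSubsection} together with Theorem \ref{TeoCohomologyQuotientStack}, the total de Rham complex of the bisimplicial manifold $G^\bullet\times X_\bullet$ computes $H^\ast(EG\times_G\Vert X_\bullet\Vert,\R)$.

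Finally I would check that $\Phi$ respects the $X_\bullet$-filtration and induces on $E_1$ exactly the classical Cartan isomorphisms $H^\ast_{\mathrm{Cartan}}(X_r)\xrightarrow{\sim}H^\ast(EG\times_G X_r)$, so that $\Phi$ is a filtered quasi-isomorphism; the comparison theorem for spectral sequences of filtered complexes then yields the claimed isomorphism $H_G^\ast(X_\bullet)\cong H^\ast(EG\times_G\Vert X_\bullet\Vert,\R)$. The main obstacle is exactly this construction: verifying that the level-wise Cartan map is natural with respect to the face maps $\pi_i$, so that it commutes with $\partial_X$ and descends to the totalisations, and that it is a quasi-isomorphism at each level. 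Both rely essentially on the compactness of $G$, which guarantees, via averaging over $G$, that taking $G$-invariants is exact and that the inclusion of invariant forms is a quasi-isomorphism, so that the equivariant de Rham theorem is available uniformly in the simplicial degree $r$.
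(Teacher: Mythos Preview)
The paper does not give its own proof of this theorem: it is stated with citations to \cite[4.2]{suzuki2016equivariant} and \cite[4.1]{suzuki2015simplicial} and then used as input, so there is nothing in the paper to compare your argument against. That said, your strategy is the standard one and is essentially what those references do: filter by simplicial degree, invoke the classical Cartan isomorphism level by level (using compactness of $G$), and then compare with the skeletal spectral sequence of the fat realisation via a map of filtered complexes.

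Your write-up is sound, with one cosmetic point and one genuine point to watch. Cosmetically, the Cartan complex should have $G$-invariants on the tensor product $(S^p(\mathfrak{g}^\vee)\otimes\Omega_{dR}^q(X_r))^G$, not merely on the differential-form factor; the paper's display is slightly ambiguous here but the subsequent double complex in Subsection~\ref{CartanModelSubsection} makes the intended meaning clear, and your spectral sequence argument is unaffected since for compact connected $G$ the distinction vanishes after passing to cohomology. The genuine point is the one you yourself flag: you must actually build the comparison map $\Phi$, not merely observe that the two spectral sequences have isomorphic $E_1$-pages. The level-wise Cartan--Weil map into the simplicial de Rham complex of $G^\bullet\times X_r$ is natural for $G$-equivariant smooth maps, hence commutes with each face map $\pi_i^\ast$ and with $\partial_X$; once that naturality is checked, the map of filtered complexes exists, induces the classical Cartan isomorphism on $E_1$, and the comparison theorem finishes the argument. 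This is exactly the content of the Suzuki references, so your proposal recovers their proof.
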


If we compare this result with theorem \ref{TeoCohomologyQuotientStack}, we see that this Cartan model and the cohomology of the quotient stack $\MM/G$ coincide, that is, for the equivariant cohomology of $\MM$ we have
\[ H^\ast_G(\MM, \R)=H^\ast(\MM/G, \R) \cong H_G^\ast (X_\bullet). \]

If we follow the general argument for differential graded algebras, as in \cite{guillemin2013supersymmetry}, we can deduce several properties concerning restrictions of the acting Lie group to a subgroup.

We recall the description of the double complex $(C^{\bullet, \bullet}, D, \iota)$ of invariant forms with
\[C^{p,q} = \left(S^{p}(\mathfrak{g}^\vee)\otimes \left(\bigoplus_{s+r=q-p}\Omega_{dR}^s(X_r)\right)\right)^G\]
with vertical operator $D$ and horizontal operator $\iota$. 

Filtering this double complex $C^{\bullet, \bullet}=\{C^{p, q}\}$ in the standard way gives rise to a spectral sequence converging to the graded associated of the equivariant cohomology
$H^\ast_G(\MM, \R)$ (compare \cite[Chap. 6]{guillemin2013supersymmetry}, \cite[Chap. 3]{botttu1982algtop}). We obtain the following identification:

\begin{teo}\label{E1SpectralsequenceEqCoh}
The $E_1$-term of the spectral sequence for the double complex $C^{\bullet, \bullet}$ of invariant forms is given as
$$ E_1^{p,q}=(S^{p}(\mathfrak{g}^\vee)\otimes H^{q-p}(X_\bullet, \mathbb{R}))^G.$$  
\end{teo}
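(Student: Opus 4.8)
The plan is to recognise $(C^{\bullet,\bullet}, D, \iota)$ as an honest double complex and to run the standard spectral sequence of the filtration by the polynomial degree $p$, i.e. by the $\iota$-direction. First I would record the bidegrees: the de Rham--simplicial differential $D = d_{dR} + \partial$ fixes the $S^p(\mathfrak{g}^\vee)$-factor and raises the total form degree $q-p$ by one, so $D \colon C^{p,q} \to C^{p,q+1}$ is vertical, while the contraction $\iota$ raises the polynomial degree by one and lowers the form degree by one, so $\iota \colon C^{p,q} \to C^{p+1,q}$ is horizontal. Filtering by $p$ in the standard way then yields $E_0^{p,q} = C^{p,q}$ with $E_0$-differential equal to the vertical operator $D$, so that
\[
E_1^{p,q} = H^q\big(C^{p,\bullet}, D\big).
\]

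For fixed $p$ the complex $(C^{p,\bullet}, D)$ is
\[
C^{p,q} = \Big(S^p(\mathfrak{g}^\vee) \otimes \bigoplus_{s+r = q-p}\Omega_{dR}^s(X_r)\Big)^G,
\]
and since $D$ acts only on the $\Omega$-factor while the finite-dimensional $G$-representation $S^p(\mathfrak{g}^\vee)$ is constant in $q$, the only real work is to move the cohomology past the tensor factor and past the invariants. The decisive step is that, as $G$ is \emph{compact}, the functor $(-)^G$ of $G$-invariants is exact on real $G$-representations, since one averages against the Haar measure to produce a $G$-equivariant projection. Because the simplicial action $\sigma_\bullet$ of Proposition \ref{ActionInducedSimpManifold} acts on $\big(\Omega^\ast_{dR}(X_\bullet), D\big)$ by chain maps — pullback along the simplicial smooth map $\sigma_\bullet$ commutes with both $d_{dR}$ and $\partial$ — taking $D$-cohomology produces a $G$-representation, and exactness of $(-)^G$ lets me commute invariants with cohomology. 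Pulling out the constant factor $S^p(\mathfrak{g}^\vee)$ I obtain
\[
E_1^{p,q} \cong \Big(S^p(\mathfrak{g}^\vee) \otimes H^{q-p}\big(\Omega^\ast_{dR}(X_\bullet), D\big)\Big)^G.
\]

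Finally I would invoke the de Rham theorem for differentiable stacks recalled in Subsection \ref{DRCohSubsection}: the cohomology $H^\ast(\Omega^\ast_{dR}(X_\bullet), D) = H^\ast_{dR}(X_\bullet)$ is canonically isomorphic to $H^\ast(X_\bullet, \R)$, the real cohomology of the fat geometric realisation. Substituting, and tracking the degree shift built into the double-complex indexing (an element of $C^{p,q}$ carries total form degree $q-p$, so the cohomology that appears sits in degree $q-p$), gives exactly
\[
E_1^{p,q} = \big(S^p(\mathfrak{g}^\vee) \otimes H^{q-p}(X_\bullet, \R)\big)^G,
\]
as claimed. The main obstacle is the commutation of $G$-invariants with $D$-cohomology; this is precisely where the compactness hypothesis on $G$ is essential, since without an averaging operator the invariants functor need not be exact and the clean factorisation of $E_1$ would fail. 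A secondary point requiring care is the bookkeeping of the two gradings, ensuring that the form-degree shift $q-p$ is matched correctly against the simplicial de Rham cohomology of $X_\bullet$.
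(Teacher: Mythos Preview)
Your proposal is correct and follows essentially the same route as the paper: both filter by the polynomial degree $p$, identify the $E_0$-differential with $D$, and then use that for compact $G$ the $G$-invariants functor commutes with $D$-cohomology so that the column cohomology of $C^{\bullet,\bullet}$ is the invariant part of $S^p(\mathfrak{g}^\vee)\otimes H^{q-p}(X_\bullet,\R)$. Your write-up is simply more explicit about the averaging argument and the bidegree bookkeeping than the paper's very terse version, which just asserts that $C^{\bullet,\bullet}$ sits inside $K^{\bullet,\bullet}=S^\ast(\mathfrak{g}^\vee)\otimes\bigoplus\Omega_{dR}^\ast(X_\bullet)$ and that passing to invariants commutes with taking cohomology.
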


\begin{proof}
The double complex $C^{\bullet, \bullet}=\{C^{p, q}\}$ with the boundary $ D$ sits inside the double complex $K^{\bullet, \bullet}=(S^{\ast}(\mathfrak{g}^\vee)\otimes (\bigoplus \Omega_{dR}^\ast (X_\bullet)))$ and the cohomology groups of  the components of $C^{\bullet, \bullet}$ are just the $G$-invariant components of the cohomology groups of the components of $K^{\bullet, \bullet}$ which are appropriately graded components of $S^*(\mathfrak{g}^\vee) \otimes H^*(X_\bullet, \mathbb{R})$.
\end{proof}

Some additional properties of the $E_1$-term can be obtained in the case when the Lie group $G$ is connected. 

\begin{prop}\label{CompoConexLie}
The connected component of the identity in $G$ acts trivially on the singular cohomology $H^\ast(X_\bullet, \mathbb{R})$.
\end{prop}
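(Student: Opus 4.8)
The guiding principle is the classical fact that a path-connected topological group acting on a space induces the trivial action on its cohomology: a path from the identity to a group element produces a homotopy between the corresponding self-map and the identity. The plan is to transport the simplicial $G$-action $\sigma_\bullet$ of Proposition \ref{ActionInducedSimpManifold} to the fat geometric realisation $\Vert X_\bullet \Vert$ and then invoke this principle, recalling that by the de Rham theorem and Remark \ref{ComplexSimplicialCohomology} the singular cohomology $H^\ast(X_\bullet, \mathbb{R})$ is precisely the singular cohomology of $\Vert X_\bullet \Vert$.

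First I would realise the action. Since the Lie group $G$ is constant in the simplicial direction, the face maps of $G \times X_\bullet$ act as the identity on the $G$-factor, so the identifications defining the fat realisation never touch $G$; as $G$ is locally compact this yields a homeomorphism
\[ \Vert G \times X_\bullet \Vert \cong G \times \Vert X_\bullet \Vert. \]
Applying the (functorial) fat geometric realisation to the simplicial smooth map $\sigma_\bullet \colon G \times X_\bullet \to X_\bullet$ therefore produces a continuous map $\Sigma \colon G \times \Vert X_\bullet \Vert \to \Vert X_\bullet \Vert$. Because the structure $2$-morphisms $\alpha$ and $\beta$ in Proposition \ref{ActionInducedSimpManifold} are identities, $\sigma_\bullet$ is a strict simplicial action, and functoriality of realisation (using the analogous identification for $G \times G$) shows that $\Sigma$ is a genuine continuous $G$-action on $\Vert X_\bullet \Vert$.

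Now let $G_0$ denote the connected component of the identity. As $G$ is a Lie group it is locally path-connected, so $G_0$ is path-connected. Given $g \in G_0$, choose a continuous path $\gamma \colon [0,1] \to G_0$ with $\gamma(0) = e$ and $\gamma(1) = g$. Then
\[ H \colon \Vert X_\bullet \Vert \times [0,1] \to \Vert X_\bullet \Vert, \qquad H(y,t) = \Sigma(\gamma(t), y), \]
is continuous and defines a homotopy from $\Sigma(e, -) = \mathrm{id}$ to $\Sigma(g, -)$. Since homotopic maps induce equal maps on singular cohomology, $\Sigma(g,-)^\ast = \mathrm{id}$ on $H^\ast(\Vert X_\bullet \Vert, \mathbb{R}) = H^\ast(X_\bullet, \mathbb{R})$. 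As $g \in G_0$ was arbitrary, $G_0$ acts trivially.

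The main point requiring care is the first step: that the fat realisation of the simplicial action genuinely yields a continuous topological $G$-action, which rests on the identification $\Vert G \times X_\bullet \Vert \cong G \times \Vert X_\bullet \Vert$ (valid precisely because $G$ is constant in the simplicial variable) together with the continuity of the realised action map. Once this is established the homotopy argument is routine. Alternatively, one could work directly with the de Rham complex $(\Omega^\ast_{dR}(X_\bullet), D)$, building for each $g \in G_0$ an explicit chain homotopy between $\sigma_g^\ast$ and the identity by integrating along the path $\gamma$; this is more computational, and I would prefer the topological formulation above.
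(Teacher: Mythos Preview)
Your argument is correct, but it is not the route the paper takes. The paper stays entirely inside the de Rham double complex of the Cartan model: for a fundamental vector field $\alpha$ one computes
\[
\iota_\alpha D + D\iota_\alpha \;=\; \iota_\alpha d_{dR} + d_{dR}\iota_\alpha \;=\; L_\alpha,
\]
the cross-terms with the simplicial differential $\partial$ cancelling because the action is simplicial (so $\iota_\alpha$ commutes with pullback along face maps). Thus the Lie derivative $L_\alpha$ is chain-nullhomotopic in $(\Omega^\ast_{dR}(X_\bullet),D)$, the infinitesimal $\mathfrak{g}$-action on cohomology vanishes, and connectedness of $G_0$ finishes the proof.

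So the paper argues infinitesimally via Cartan's magic formula, whereas you argue globally via a homotopy on the fat realisation. Your approach is more topological and would give the result for any coefficient ring, at the cost of the point-set verification that $\Vert G\times X_\bullet\Vert \cong G\times\Vert X_\bullet\Vert$ and that the realised map is a continuous action. The paper's approach is shorter in context because the operators $D$ and $\iota$ are already on the table from the Cartan model, and it feeds directly into the spectral-sequence computations that follow. Your closing remark about building a chain homotopy by integrating along $\gamma$ is in fact closer in spirit to what the paper does, except that the paper bypasses the integration by working with the Lie derivative from the start.
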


\begin{proof}
We consider the operator
\[ \iota D +  D \iota =  \iota (d_{dR}+(-1)^{q+1} \partial) +  (d_{dR}+(-1)^{q} \partial) \iota =  \iota d_{dR}+ d_{dR} \iota = L_\alpha. \]
Therefore the Lie derivative $ L_\alpha$ is chain homotopic to $0$ in $\Omega_{dR}^q(X_r)$. As for the Lie derivative the action is trivial in a connected component of the identity, we have the result.
\end{proof}

\begin{teo}
If $G$ is connected, then $E_1^{p,q}=S^p(\mathfrak{g}^\vee)^G \otimes H^{q-p}(X_\bullet, \mathbb{R})$
\end{teo}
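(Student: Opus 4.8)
The plan is to deduce the statement directly from Theorem \ref{E1SpectralsequenceEqCoh} together with Proposition \ref{CompoConexLie}, by reducing the computation of $G$-invariants of a tensor product to the case in which the action on one of the factors is trivial. First I would observe that a connected Lie group $G$ coincides with its own connected component of the identity, so Proposition \ref{CompoConexLie} applies verbatim and tells us that $G$ acts trivially on the singular cohomology $H^{q-p}(X_\bullet, \mathbb{R})$.

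Next I would recall the identification of Theorem \ref{E1SpectralsequenceEqCoh}, namely $E_1^{p,q} = (S^{p}(\mathfrak{g}^\vee)\otimes H^{q-p}(X_\bullet, \mathbb{R}))^G$, where $G$ acts diagonally: by the (symmetric power of the) coadjoint action on $S^{p}(\mathfrak{g}^\vee)$ and by the action induced from $\sigma_\bullet$ on the simplicial cohomology. Having just established that the second factor carries the trivial $G$-action, the diagonal action on the tensor product reduces to the coadjoint action on the first factor alone, with the second factor merely coming along for the ride.

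The remaining algebraic step is the standard identity $(A\otimes_{\mathbb{R}} B)^G = A^G \otimes_{\mathbb{R}} B$, valid for any real $G$-representation $A$ and any trivial $G$-module $B$. To justify it one chooses an $\mathbb{R}$-basis $\{b_j\}$ of $B$ and writes an arbitrary element of $A\otimes_{\mathbb{R}} B$ uniquely as $\sum_j a_j\otimes b_j$; the diagonal invariance of this element is then equivalent to the condition $a_j\in A^G$ for every $j$. Applying this with $A = S^{p}(\mathfrak{g}^\vee)$ and $B = H^{q-p}(X_\bullet, \mathbb{R})$ yields $E_1^{p,q} = S^{p}(\mathfrak{g}^\vee)^G \otimes H^{q-p}(X_\bullet, \mathbb{R})$, as claimed.

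I do not expect any substantial obstacle here: the result is essentially a corollary assembling the two preceding statements. The only point demanding a little care is the tensor-product invariance identity, which relies on our working over the field $\mathbb{R}$ so that a basis of the trivial factor may be chosen; this is harmless since all the cohomology groups in play are real vector spaces.
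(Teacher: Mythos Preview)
Your proposal is correct and follows exactly the same route as the paper: invoke Proposition \ref{CompoConexLie} to see that the connected group $G$ acts trivially on $H^{q-p}(X_\bullet,\mathbb{R})$, then plug this into the description of $E_1^{p,q}$ from Theorem \ref{E1SpectralsequenceEqCoh}. The only difference is that you spell out the elementary identity $(A\otimes B)^G = A^G\otimes B$ for a trivial $G$-module $B$, which the paper leaves implicit.
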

\begin{proof}
By proposition \ref{CompoConexLie}, we see that $G$ acts trivially on $H^\ast(X_\bullet, \mathbb{R})$ and so from the description of the $E_1$-term of the spectral sequence in theorem \ref{E1SpectralsequenceEqCoh} we get the desired result.
\end{proof}

The above theorem helps us to get a relation between the equivariant cohomology for an action of a compact connected Lie group G and the action of a closed subgroup $K$ of $G$ in the following way

\begin{teo} \label{RestGr}
Let $G$ be a compact connected Lie group and $K$ a closed subgroup of $G$. Suppose that the restriction map 
\[S(\mathfrak{g}^\vee)^G \rightarrow S(\mathfrak{k}^\vee)^K \]
is an isomorphism. Then the induced map in equivariant cohomology is also an isomorphism 
\[ H_G(\MM, \mathbb{R})\xrightarrow{\cong} H_K(\MM, \mathbb{R}). \]

\end{teo}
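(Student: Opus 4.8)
The plan is to compare the Cartan-model double complexes for $G$ and for $K$ via the restriction map and to deduce the statement from a spectral sequence comparison argument. First I would note that since $K$ is a closed subgroup of the compact group $G$, it is itself a compact Lie group, and $\MM$ becomes a differentiable $K$-stack by restricting the $G$-action along $K \hookrightarrow G$ (keeping the same atlas $X$). Dualising the inclusion $\mathfrak{k} \hookrightarrow \mathfrak{g}$ gives a restriction map $S(\mathfrak{g}^\vee) \to S(\mathfrak{k}^\vee)$, and every $G$-invariant form on $X_\bullet$ is in particular $K$-invariant. Combining these produces a morphism of double complexes
\[ r : C^{\bullet,\bullet}_G \longrightarrow C^{\bullet,\bullet}_K \]
between the double complexes of invariant forms of Subsection \ref{CartanModelSubsection}. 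I would check that $r$ is a genuine chain map: compatibility with the vertical operator $D = d_{dR} + \partial$ is immediate since $D$ does not involve the group, while compatibility with the horizontal operator $\iota$ follows because the fundamental vector fields generating the $K$-action are exactly the restrictions of those generating the $G$-action. The map induced by $r$ on cohomology is precisely the restriction map $H_G(\MM, \R) \to H_K(\MM, \R)$ appearing in the statement.

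Next I would filter both double complexes by the polynomial degree $p$ in the standard way, so that $r$ becomes a morphism of the associated spectral sequences, both of which converge to the respective equivariant cohomologies. The key observation is that the filtration is finite in each total degree $n = p + q$ (one has $0 \le p \le n/2$), so both spectral sequences are bounded and the comparison theorem will apply once I control a single page. I would then identify the $E_1$-pages. Since $G$ is connected we have $G = G^0$, so by Proposition \ref{CompoConexLie} the group $G$ acts trivially on $H^\ast(X_\bullet, \R)$; as $K \subset G$, the subgroup $K$ acts trivially as well. Feeding this into Theorem \ref{E1SpectralsequenceEqCoh} (and its evident analogue for the compact group $K$) collapses the invariants, giving
\[ E_1^{p,q}(G) = S^p(\mathfrak{g}^\vee)^G \otimes H^{q-p}(X_\bullet, \R), \qquad E_1^{p,q}(K) = S^p(\mathfrak{k}^\vee)^K \otimes H^{q-p}(X_\bullet, \R), \]
and the map induced by $r$ on $E_1$ is the restriction $S(\mathfrak{g}^\vee)^G \to S(\mathfrak{k}^\vee)^K$ tensored with the identity on $H^\ast(X_\bullet, \R)$.

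Finally, the hypothesis that $S(\mathfrak{g}^\vee)^G \to S(\mathfrak{k}^\vee)^K$ is an isomorphism makes $r$ an isomorphism on every $E_1^{p,q}$. A morphism of convergent spectral sequences that is an isomorphism on $E_1$ is an isomorphism on all later pages and hence on $E_\infty$; since the filtrations are bounded in each total degree, this forces the induced map on the abutments $H_G(\MM, \R) \to H_K(\MM, \R)$ to be an isomorphism, as desired. I expect the main obstacle to be the verification that $r$ is well-defined as a chain map — in particular the compatibility of the interior-multiplication operators $\iota$ for $G$ and for $K$, which rests on the fact that the $K$-fundamental vector fields are restrictions of the $G$-fundamental vector fields — together with confirming that the splitting of the $K$-invariants genuinely holds, i.e. that the $K$-action on $H^\ast(X_\bullet, \R)$ is trivial because $K$ sits inside the connected group $G$. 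Once these points are in place, the spectral sequence comparison is purely formal.
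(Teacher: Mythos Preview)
Your proposal is correct and follows essentially the same route as the paper's proof: construct the restriction map between the Cartan double complexes, pass to the spectral sequences obtained by filtering on polynomial degree, use connectedness of $G$ (via Proposition~\ref{CompoConexLie} and Theorem~\ref{E1SpectralsequenceEqCoh}) to identify both $E_1$-pages as $S^p(-)^{\mathrm{inv}}\otimes H^{q-p}(X_\bullet,\mathbb{R})$, and then invoke the hypothesis together with the spectral sequence comparison theorem. Your write-up is in fact more explicit than the paper's on several points (compactness of $K$, compatibility of $\iota$ under restriction, boundedness of the filtration), but the underlying argument is the same.
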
 
 \begin{proof}
 
 We have an injection of Lie algebras
\[\mathfrak{k}\rightarrow \mathfrak{g}\]
where $\mathfrak{k}$ is the Lie algebra of $K$ and $\mathfrak{g}$ is the Lie algebra of $G$.
We also have an injection between the dual spaces
\[\mathfrak{g}^\vee \rightarrow \mathfrak{k}^\vee, \]
which extends to the associated symmetric algebras
\[S(\mathfrak{g}^\vee) \rightarrow S(\mathfrak{k}^\vee) \]
and then to
\[ (S(\mathfrak{g}^\vee)\otimes \bigoplus \Omega_{dR}^\ast (X_\bullet ))^G \rightarrow (S(\mathfrak{k}^\vee)\otimes \bigoplus \Omega_{dR}^\ast (X_\bullet ))^K. \]
Therefore we get an induced homomorphism
\[ H_G(X_\bullet, \mathbb{R})\rightarrow H_K(X_\bullet, \mathbb{R}) \]
and also an induced homomorphism at each stage of the corresponding spectral sequences. Since $G$ acts trivially on $H^\ast(X_\bullet, \mathbb{R})$ and $K$ is a subgroup of $G$, the group $K$ acts trivially as well. So by Theorem \ref{E1SpectralsequenceEqCoh}, we get
a morphism of the $E_1$-terms
\[S(\mathfrak{g}^\vee)^G \otimes \bigoplus \Omega_{dR}^\ast (X_\bullet )\rightarrow S(\mathfrak{k}^\vee)^K\otimes \bigoplus \Omega_{dR}^\ast (X_\bullet ) \] 
and therefore an isomorphism
$$ H_G(X_\bullet, \mathbb{R})\xrightarrow{\cong} H_K(X_\bullet, \mathbb{R}).   $$
which gives the desired result.
 \end{proof}
 
We also obtain the following generalisation of the classical situation for equivariant cohomology of smooth manifolds 

\begin{teo}\label{TorusMaxSimplicial}
Let $G$ be a connected compact Lie group, $T$ a maximal torus of $G$ and $W$ the Weyl group. Futhermore, let $\MM$ be a differentiable $G$-stack. Then we have
\[ H_G^\ast(\MM,\mathbb{R})\cong H_T^\ast(\MM,\mathbb{R})^W. \]
\end{teo}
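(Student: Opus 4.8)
The plan is to run the spectral-sequence strategy already used for Theorem \ref{RestGr}, but to replace its hypothesis that the restriction $S(\mathfrak{g}^\vee)^G \to S(\mathfrak{k}^\vee)^K$ be an isomorphism with the Chevalley restriction theorem, which identifies $S(\mathfrak{g}^\vee)^G$ precisely with the $W$-invariants $S(\mathfrak{t}^\vee)^W$. Since $T$ is a closed subgroup of $G$, restricting the $G$-action makes $\MM$ a differentiable $T$-stack, and $H_T^\ast(\MM,\R)$ is computed by the simplicial Cartan double complex $C(T)^{\bullet,\bullet}=(S(\mathfrak{t}^\vee)\otimes\bigoplus\Omega_{dR}^\ast(X_\bullet))^T$. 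Because $T$ is connected and abelian we have $S(\mathfrak{t}^\vee)^T=S(\mathfrak{t}^\vee)$, so Theorem \ref{E1SpectralsequenceEqCoh} together with Proposition \ref{CompoConexLie} gives $E_1^{p,q}(T)=S^p(\mathfrak{t}^\vee)\otimes H^{q-p}(X_\bullet,\R)$.

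First I would make the Weyl group action explicit. The normaliser $N_G(T)$ acts on $C(T)^{\bullet,\bullet}$ through its coadjoint action on $\mathfrak{t}^\vee$ together with its action on $X_\bullet$ induced from $\sigma_\bullet$; since $T$ acts trivially on $T$-invariant forms, this descends to an action of $W=N_G(T)/T$, and hence to a $W$-action on $H_T^\ast(\MM,\R)$. Because $\R$ has characteristic zero and $W$ is finite, the functor $(-)^W$ is exact, so it commutes with taking cohomology and with the spectral sequence of $C(T)^{\bullet,\bullet}$. Thus $(H_T^\ast(\MM,\R))^W$ is the abutment of the spectral sequence of the subcomplex $(C(T)^{\bullet,\bullet})^W$, whose $E_1$-term is $(E_1^{p,q}(T))^W=S^p(\mathfrak{t}^\vee)^W\otimes H^{q-p}(X_\bullet,\R)$; here I use that $N_G(T)\subseteq G$ acts trivially on $H^\ast(X_\bullet,\R)$ by Proposition \ref{CompoConexLie}, so $W$ acts trivially on the cohomology factor.

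Next I would construct the comparison map. The inclusion $\mathfrak{t}\hookrightarrow\mathfrak{g}$ dualises to $\mathfrak{g}^\vee\to\mathfrak{t}^\vee$ and, together with the fact that $G$-invariant forms are a fortiori $T$-invariant, induces a morphism of double complexes $r:C(G)^{\bullet,\bullet}\to C(T)^{\bullet,\bullet}$ exactly as in the proof of Theorem \ref{RestGr}. Since a $G$-invariant element is in particular $N_G(T)$-invariant and restriction to $\mathfrak{t}^\vee$ is $N_G(T)$-equivariant, the image of $r$ lands in $(C(T)^{\bullet,\bullet})^W$, so $r$ factors through $\bar r:C(G)^{\bullet,\bullet}\to(C(T)^{\bullet,\bullet})^W$. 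On $E_1$-terms this is the map $S^p(\mathfrak{g}^\vee)^G\otimes H^{q-p}(X_\bullet,\R)\to S^p(\mathfrak{t}^\vee)^W\otimes H^{q-p}(X_\bullet,\R)$ given by the restriction homomorphism tensored with the identity.

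Finally, the Chevalley restriction theorem asserts that $S(\mathfrak{g}^\vee)^G\to S(\mathfrak{t}^\vee)^W$ is an isomorphism, so $\bar r$ is an isomorphism on every $E_1$-term. A morphism of convergent first-quadrant spectral sequences that is an isomorphism on $E_1$ is an isomorphism on the abutment, whence $\bar r$ yields the desired isomorphism
\[ H_G^\ast(\MM,\R)\xrightarrow{\;\cong\;}H_T^\ast(\MM,\R)^W. \]
The genuinely nontrivial input is the Chevalley restriction theorem; I expect the main obstacle otherwise to be bookkeeping, namely setting up the $W$-action on the simplicial Cartan complex so that it is compatible with the $E_1$-identification and with the coadjoint action on $S(\mathfrak{t}^\vee)$, and confirming that $W$ acts trivially on $H^\ast(X_\bullet,\R)$, which follows from Proposition \ref{CompoConexLie} since $N_G(T)\subseteq G$. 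Everything else is a $W$-equivariant refinement of the argument already given for Theorem \ref{RestGr}.
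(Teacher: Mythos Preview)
Your argument is correct and uses the same key ingredients as the paper: the Chevalley restriction theorem, the spectral sequence of the simplicial Cartan double complex with its $E_1$-identification from Theorem~\ref{E1SpectralsequenceEqCoh}, and the triviality of the $G$-action on $H^\ast(X_\bullet,\R)$ from Proposition~\ref{CompoConexLie}.

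The organisation differs slightly from the paper's proof. The paper proceeds in two steps: it first sets $K=N_G(T)$, observes that $\mathfrak{k}=\mathfrak{t}$ and $S(\mathfrak{k}^\vee)^K=S(\mathfrak{t}^\vee)^W$, and then applies Theorem~\ref{RestGr} verbatim to the pair $(G,K)$ to obtain $H_G^\ast(\MM,\R)\cong H_K^\ast(\MM,\R)$; in a second step it compares $C_K^\bullet(X_\bullet)$ with $C_T^\bullet(X_\bullet)^W$ via the inclusion $T\hookrightarrow K$, which on $E_1$ is literally the identity $S(\mathfrak{t}^\vee)^W\otimes H^\ast(X_\bullet,\R)\to S(\mathfrak{t}^\vee)^W\otimes H^\ast(X_\bullet,\R)$. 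You instead go in a single step from $C(G)^{\bullet,\bullet}$ directly to the $W$-invariant subcomplex $(C(T)^{\bullet,\bullet})^W$, invoking exactness of $(-)^W$ in characteristic zero to identify its cohomology with $H_T^\ast(\MM,\R)^W$. Your route is marginally more direct and makes the role of the $W$-action on the Cartan complex more explicit; the paper's route has the advantage of reusing Theorem~\ref{RestGr} as a black box for the first half. Both arrive at the same $E_1$-comparison governed by Chevalley restriction.
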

\begin{proof}
Let $T$ be a maximal torus of $G$ and let $K=N(T)$ be its normalizer. The Weyl group $W$ of $G$ is the quotient group $W=K/T$. It is a finite group and the Lie algebra of $K$ is the same as the Lie algebra of $T$. Since $T$ is abelian its action on $\mathfrak{t}^\vee$ and on $S(\mathfrak{t}^\vee)$ is trivial. So we get
\[ S(\mathfrak{k}^\vee)^K = S(\mathfrak{t}^\vee)^K = S(\mathfrak{t}^\vee)^W \]
According to the Chevalley restriction theorem the restriction morphism
\[S(\mathfrak{g}^\vee)^G\rightarrow S(\mathfrak{t}^\vee)^W \]
is an isomorphism (see for example \cite[2.1.5.1]{warner2012harmonic}). Therefore we can apply Theorem \ref{RestGr}.
Considering the inclusion $T\rightarrow K$ we get a morphism of double complexes
\[ C^\bullet_K(X_\bullet)\rightarrow C^\bullet_T(X_\bullet)^W \]
which induces a morphism
\[ H^\ast_K(X_\bullet, \R)\rightarrow H^\ast_T(X_\bullet, \R)^W \]
as well as a morphism at each stage of the spectral sequences. At the $E_1$-stage we obtain the identity morphism 
\[ S(\mathfrak{t}^\vee)^W \otimes H^\ast(X_\bullet,\mathbb{R}) \rightarrow S(\mathfrak{t}^\vee)^W \otimes H^\ast(X_\bullet,\mathbb{R}). \] 
Therefore we get that $H_K^\ast(X_\bullet,\mathbb{R})\cong H_T^\ast(X_\bullet,\mathbb{R})^W$ and so $H_G^\ast(\MM,\mathbb{R})\cong H_T^\ast(\MM,\mathbb{R})^W$, as a consequence of Theorem \ref{RestGr}. \qedhere
\end{proof}

\subsection{The Getzler model for differentiable \texorpdfstring{$G$}{G}-stacks}
Finally, we extend the Getzler model \cite{getzler1994equivariant} for equivariant cohomology of smooth manifold with arbitrary Lie group actions to differentiable stacks. Our description is closely related to the Getzler model as described by K\"ubel-Thom in \cite{kubel2015equivariant}. 

Let $\MM$ be a differentiable $G$-stack with $G$-atlas $X\rightarrow \MM$ for a general Lie group $G$ acting smoothly on $\MM$. The action on $\MM$ will always be denoted by $\mu$ and the one on $X$ by $\sigma$. We will also consider the induced simplicial smooth action $\sigma_\bullet$ on the associated simplicial smooth manifold $X_\bullet=\{X_n\}_{n\geq 0}$, i.e. the nerve of the associated Lie groupoid $(X\times_\MM X \rightrightarrows X)$, which we described in Proposition \ref{ActionInducedSimpManifold}.

Let us first consider the Getzler complex for the smooth manifolds $X_n$. We define the vector spaces $C^p(G,S^{\ast}(\mathfrak{g}^\vee)\otimes \Omega_{dR}^\ast(X_n))$ of smooth maps 
\[ G^p\rightarrow S^{\ast}(\mathfrak{g}^\vee)\otimes \Omega_{dR}^\ast(X_n) \]
from the $p$-fold cartesian product $G^p$ to $S^{\ast}(\mathfrak{g}^\vee)\otimes \Omega_{dR}^\ast(X_n)$, i.e. the polynomial maps from $\mathfrak{g}$ to the differential forms on $X_n$. The complex $C^{\bullet}$ is then given by
\[ \bigoplus_{p+2l+m=s}C^p(G,S^{l}(\mathfrak{g}^\vee)\otimes \Omega_{dR}^m(X_n)) \]
endowed with the differential $\d+\i+ (-1)^p (d+ \iota)$ as defined by  Getzler in \cite{getzler1994equivariant}, where  $d+\iota$ is the Cartan differential operator as in the Cartan model, the operator $\d$ is given by
\[ \d: C^p(G,S^{\ast}(\mathfrak{g}^\vee)\otimes \Omega_{dR}^\ast(X_n))\rightarrow C^{p+1}(G,S^{\ast}(\mathfrak{g}^\vee)\otimes \Omega_{dR}^\ast(X_n)) \]
such that
\[ (\d f)(g_0,\ldots,g_k\mid Y) := f(g_1,\ldots,g_k\mid Y) + \sum_{i=1}^k f(g_0,\ldots, g_{i-1}g_i,\ldots,g_k\mid Y) \]
\[ + (-1)^{k+1}f(g_0,\ldots,g_{k-1}\mid \text{Ad}(g_k^{-1})Y) \]
for $g_0, \ldots, g_k \in G$ and $Y\in \mathfrak{g}$, and the operator $\i$ is defined by
\[ \i: C^p(G,S^{l}(\mathfrak{g}^\vee)\otimes \Omega_{dR}^m(X_n))\rightarrow C^{p-1}(G,S^{l+1}(\mathfrak{g}^\vee)\otimes \Omega_{dR}^m(X_n)) \]
with
\[ (\i f)(g_1,\ldots,g_{p-1}\mid Y) := \sum_{i=0}^{p-1} \frac{d}{dt}\bigg\lvert_{t=0} f(g_1,\ldots, g_i, \text{exp}(t Y_i), g_{i+1},\ldots,g_{p-1}\mid Y) \]
where $Y_i=\text{Ad}(g_{i+1}\cdots g_{p-1})Y$. 

However we can also consider the simplicial differential $\partial_X$ for the associated simplicial smooth manifold $X_\bullet$. Because $\partial_X$ commutes with the operator $\d+\i+ (-1)^p (d+ \iota)$, we can therefore consider the complex $C^{\bullet}$ with
\[\bigoplus_{p+2l+m+n=s}C^p(G,S^{l}(\mathfrak{g}^\vee)\otimes \Omega_{dR}^m(X_n)) \]
and differential $ \d+\i+ (-1)^p (d+ \iota)+ (-1)^{p+2l+m}\partial_X $. The cohomology $H^*_G(X_\bullet)$ of this complex gives the Getzler 
model for differentiable $G$-stacks and it calculates the equivariant cohomology as the following result shows
\begin{teo}
Let $\MM$ be a differentiable $G$-stack with $G$-atlas $X\rightarrow \MM$ for a general Lie group $G$. Then there is an isomorphism
\[ H^\ast_G(\MM, \mathbb{R})\cong H^*_G(X_\bullet). \]
\end{teo}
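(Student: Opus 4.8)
The plan is to reduce the statement to the single-manifold Getzler theorem of Getzler \cite{getzler1994equivariant} and K\"ubel--Thom \cite{kubel2015equivariant} by means of a spectral sequence in the simplicial direction, and then to match the result with the Borel description of $H^\ast_G(\MM,\R)$ supplied by Theorem \ref{TeoCohomologyQuotientStack}. The key observation is that the total differential $\d+\i+(-1)^p(d+\iota)+(-1)^{p+2l+m}\partial_X$ of the Getzler complex $C^\bullet$ splits into a part $D_G:=\d+\i+(-1)^p(d+\iota)$ that preserves the simplicial degree $n$ and the part $\partial_X$ that raises $n$ by one.

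First I would filter the total complex $C^\bullet$ by the simplicial degree $n$. Since $\partial_X$ strictly increases $n$ while $D_G$ fixes it, the associated subspaces form a filtration by subcomplexes and yield a spectral sequence converging to $H^\ast_G(X_\bullet)$. The $E_0$-differential is $D_G$ acting inside each fixed $X_n$, so the $E_0$-page is precisely the Getzler complex of the single $G$-manifold $X_n$. By the theorem of Getzler and K\"ubel--Thom this Getzler complex computes the Borel equivariant cohomology of $X_n$, giving
\[ E_1^{n,\ast}\cong H^\ast_G(X_n)\cong H^\ast(EG\times_G X_n,\R), \]
and the induced $E_1$-differential is the simplicial coboundary $\partial_X$.

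Next I would produce the matching spectral sequence on the Borel side. By Theorem \ref{TeoCohomologyQuotientStack} we have $H^\ast_G(\MM,\R)=H^\ast(\MM/G,\R)\cong H^\ast(EG\times_G\Vert X_\bullet\Vert,\R)$, and $EG\times_G\Vert X_\bullet\Vert$ is the fat realisation of the simplicial space $n\mapsto EG\times_G X_n$. The standard cohomology spectral sequence of a simplicial space then has $E_1$-page $H^\ast(EG\times_G X_n,\R)$ with $d_1=\partial_X$ and converges to $H^\ast(EG\times_G\Vert X_\bullet\Vert,\R)$. Thus both spectral sequences carry identical $E_1$-pages together with identical $d_1$-differentials.

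Finally I would construct a morphism of filtered complexes inducing this identification of $E_1$-pages and invoke the comparison theorem for spectral sequences to conclude that the two abutments agree, yielding the desired isomorphism $H^\ast_G(\MM,\R)\cong H^\ast_G(X_\bullet)$. The main obstacle will be making the comparison precise at the chain level: one must realise the Getzler--K\"ubel--Thom equivalence $X_n$-by-$X_n$ in a way compatible with the simplicial face maps $\partial_X$, i.e.\ establish naturality of the Getzler quasi-isomorphism with respect to the $G$-equivariant structure maps of $X_\bullet$, and one must verify that both filtrations converge despite the infinite direct sums and the non-compactness of $G$ --- indeed it is precisely this non-compact setting that forces the use of the Getzler model rather than the Cartan model of the previous subsection.
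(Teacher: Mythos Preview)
Your proposal is correct and follows essentially the same strategy as the paper: filter by the simplicial degree $n$ of $X_\bullet$, identify the $E_1$-page with the Borel equivariant cohomology of each $X_n$ via Getzler's single-manifold theorem, and then compare spectral sequences.

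The only noteworthy difference is the choice of comparison target. The paper compares the Getzler double complex directly against the de Rham triple complex $(\Omega_{dR}^q(G^p\times X_n), d_{dR},\partial_G,\partial_X)$, which already computes $H^\ast_G(\MM,\R)$ by Theorem \ref{TeoCohomologyQuotientStack}; both are filtered by $n$, and Getzler's explicit chain-level quasi-isomorphism between his model and the simplicial de Rham model of the action groupoid $(G\times X_n\rightrightarrows X_n)$ supplies the map of filtered complexes immediately, with naturality in $X_n$ automatic from the construction. You instead pass through the topological spectral sequence of the simplicial space $n\mapsto EG\times_G X_n$, which is perfectly valid but forces you to manufacture the filtered comparison map by hand --- exactly the obstacle you flag at the end. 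The paper's route sidesteps this by staying entirely on the de Rham side, so the ``naturality of the Getzler quasi-isomorphism'' you worry about is already part of Getzler's original argument and need not be re-established.
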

\begin{proof}
To get the result we are going to use a spectral sequence argument. Let us consider the complex $\bigoplus_{p+2l+m+n=s}C^p(G,S^{l}(\mathfrak{g}^\vee)\otimes \Omega_{dR}^m(X_n))$  as the double complex $E_0^{' \bullet, \bullet}$ with
\[\left( E_0^{'s,n} = \left( \bigoplus_{p+2l+m=s, n }C^p(G,S^{l}(\mathfrak{g}^\vee)\otimes \Omega_{dR}^m(X_n)) \right), \d+\i+ (-1)^p (d+ \iota) , \partial_X \right) \]
In the same way, we consider the triple complex $(\Omega_{dR}^q(G^p\times X_n), d_{dR}, \partial_G,\partial_X)$ as the double complex $E_0^{\bullet, \bullet}$ with
\[ \left( E_0^{s,n} = \left( \bigoplus_{p+q=s, n } \Omega_{dR}^q(G^p\times X_n) \right), d_{dR}+(-1)^q\partial_G , \partial_X \right)\]
As Getzler showed (see \cite[2.2.3]{getzler1994equivariant}), we know that if we calculate both cohomologies first for the index $s$, we will get the same cohomology. This is due to the fact that the cohomology of $E_0^{s,n}$ is the cohomology of the associated Lie groupoid $(G\times X_n \rightrightarrows X_n)$ for the action of $G$ on the smooth manifold $X_n$, i.e., it gives the equivariant cohomology of $X_n$. Therefore $E_1^{'s,n} \cong E_1^{s,n}$, so both spectral sequences are isomorphic in the first page and we have that both spectral sequences converge to the same cohomology $H^*_G(\MM, \mathbb{R})$, thus we get the desired isomorphism between the equivariant cohomology groups. 
\end{proof}

\begin{rem}
In \cite{abadcrainic2013Bottss}, Arias Abad-Crainic introduced the notion of representations up to homotopy for Lie groupoids to extend Getzler's model of equivariant cohomology for arbitrary Lie group actions on smooth manifolds to more general actions by arbitrary Lie groupoids. Here we study generalisations of the models for equivariant cohomology of differentiable $G$-stacks $\MM$ i.e., for arbitrary Lie group actions on differentiable stacks. We can alternatively consider also the Lie groupoid $(X\times_{\MM/G}X\rightrightarrows X)$ associated to a given $G$-atlas $X$ of the differentiable $G$-stack $\MM$, whose nerve gives a simplicial smooth manifold $X_{\bullet}$. As we have seen in theorem \ref{TeoCohomologyQuotientStack} the equivariant cohomology of $\MM$ is given by the Borel construction of the fat geometric realisation of $X_{\bullet}$. We can then apply the constructions of \cite{abadcrainic2013Bottss} to the Lie groupoid $(X\times_{\MM/G}X\rightrightarrows X)$
and consider its category of representations up to homotopy and differentiable cohomology and it is an interesting question to see how they relate to the equivariant cohomology of $G$-stacks as defined here. In general, it is not yet clear if the category of representations up to homotopy is Morita invariant and therefore will give in fact an invariant of the $G$-stack as it is the case with the models discussed here. It can be expected that they will give the same equivariant cohomology in a derived setting.
In the particular case, where the differentiable $G$-stack is just a smooth manifold $X$ equipped with an action by an arbitrary Lie group $G$, we recover the classical Getzler model for non-compact Lie group actions on smooth manifolds and the associated Lie groupoid of an atlas is just the transformation groupoid $(G\times X\rightrightarrows X)$ and the models in \cite{abadcrainic2013Bottss} and ours will give the same equivariant cohomology. 
It is also interesting to extend stacky actions on differentiable stacks from Lie groups to actions of group stacks or Lie 2-groups and to develop equivariant cohomology in these more general contexts. We aim to discuss these questions in a follow-up article.
\end{rem}

\begin{rem} 
The models for equivariant cohomology of differentiable $G$-stacks described here are also closely related to the work of Safronov \cite{safronov2016hamilton} on the interpretation of quasi-Hamiltonian reduction via shifted symplectic and Poisson structures on stacks. Underlying this general framework is a symplectic stack $X$ together with a choice of a Lagrangian $L\rightarrow X$ and a $G$-stack $\MM$. A symplectic reduction is then given by a moment map $\mu: \MM/G\rightarrow X$ together with a Lagrangian structure. Shifted symplectic structures can then be detected by studying particular closed differential forms on the quotient stack giving rise to cohomology classes in the equivariant cohomology $H^\ast_G(\MM)$.
\end{rem}

\section{Spectral sequences for equivariant cohomology of differentiable \texorpdfstring{$G$}{G}-stacks}

\noindent In this final section we will derive some auxiliary results about spectral sequences for the equivariant cohomology of a differentiable $G$-stack. These will be derived in the frameworks of sheaf cohomology, hypercohomology and continuous cohomology. We also recover in special cases some spectral sequences previously constructed in the context of smooth manifolds with Lie group actions, namely by Felder-Henriques-Rossi-Zhu \cite{felder2008gerbe}, Stasheff \cite{stasheff1978continuous} and Bott \cite{bott1973chern}.

\subsection{Sheaf cohomology of quotient stacks and spectral sequences}

Let $G$ be a Lie group, $\MM$ a differentiable $G$-stack and $\mathfrak{F}$ a cartesian sheaf on the quotient stack $\mathcal{M}/G$. We refer to \cite{heinloth2005notes}, \cite{felder2008gerbe} and \cite{behrend2011differentiable} for sheaves on differentiable stacks and their cohomology. If we consider an atlas $X\rightarrow\mathcal{M}\rightarrow \MM/G$, where $X\rightarrow\mathcal{M}$ is a $G$-atlas for $\MM$ then we get an induced sheaf $\mathfrak{F}$ on $\mathcal{M}$ denoted by the same letter $\mathfrak{F}$ and also a bisimplicial sheaf $\mathfrak{F}_\bullet$ on the associated bisimplicial smooth manifold $Z_{\bullet, \bullet}$ with $Z_{p, n}= G^p \times X_n $ as described before in the proof of theorem \ref{TeoCohomologyQuotientStack}.  We can derive two spectral sequences under these assumptions. The following one relates the equivariant cohomology of the associated simplicial manifold with the equivariant cohomology of the differentiable $G$-stack itself:

\begin{teo}
There exists a spectral sequence such that
\[ E_1^{r,n} = H^r([X_n/G], \mathfrak{F}) \Rightarrow H^{r+n}_G(\mathcal{M}, \mathfrak{F}). \]
\end{teo}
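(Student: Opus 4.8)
The plan is to use the bisimplicial structure established in the proof of Theorem \ref{TeoCohomologyQuotientStack}. Recall that the equivariant cohomology $H^\ast_G(\MM,\mathfrak{F}) = H^\ast(\MM/G, \mathfrak{F})$ is computed from the bisimplicial smooth manifold $Z_{\bullet,\bullet}$ with $Z_{p,n}=G^p\times X_n$, together with the bisimplicial sheaf $\mathfrak{F}_\bullet$. The cohomology of the quotient stack $\MM/G$ with coefficients in $\mathfrak{F}$ is the total cohomology of the associated triple complex obtained by taking an injective resolution $\mathfrak{F}\to K^\bullet$ and forming global sections $\Gamma(Z_{\bullet,\bullet}, K^\bullet)$, with three differentials: the horizontal simplicial differential $\partial_G$ (in the $p$-index), the vertical simplicial differential $\partial_X$ (in the $n$-index), and the resolution differential $d_K$.

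First I would organise this triple complex into a double complex by grouping the two simplicial directions appropriately. Specifically, I would fix the vertical index $n$ and regard the pair $(p\text{-direction}, \text{resolution direction})$ as computing, for each fixed $n$, the equivariant cohomology of the smooth manifold $X_n$ with its $G$-action, namely $H^r([X_n/G],\mathfrak{F})$. This is exactly the cohomology of the quotient stack $[X_n/G]$, whose atlas-level description is the simplicial manifold $G^\bullet\times X_n$ (the transformation groupoid of the $G$-action $\sigma_n$ on $X_n$ from Proposition \ref{ActionInducedSimpManifold}). So the double complex has the form $E_0^{r,n}$ with first differential combining $\partial_G$ and $d_K$, and second differential $\partial_X$.

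Next I would filter this double complex by the vertical simplicial degree $n$, producing a spectral sequence. Taking cohomology first in the $(p,\text{resolution})$-directions, the $E_1$-page is obtained by computing, for each $n$, the sheaf cohomology of the quotient stack $[X_n/G]$, which gives
\[ E_1^{r,n} = H^r([X_n/G], \mathfrak{F}). \]
The remaining differential on the $E_1$-page is induced by the simplicial differential $\partial_X$ of $X_\bullet$. Since the total complex computes $H^\ast(\MM/G,\mathfrak{F}) = H^\ast_G(\MM,\mathfrak{F})$, standard convergence of the spectral sequence of a filtered double complex (bounded in the relevant range, which holds because we are using a bounded injective resolution and the simplicial degrees contribute a first-quadrant filtration) yields the abutment $E_1^{r,n}\Rightarrow H^{r+n}_G(\MM,\mathfrak{F})$.

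The main obstacle I expect is the identification of the $E_1$-term with $H^r([X_n/G],\mathfrak{F})$, which requires justifying that fixing $n$ and taking cohomology in the two remaining directions really reproduces the sheaf cohomology of the stack $[X_n/G]$ rather than merely that of the simplicial manifold $G^\bullet\times X_n$. This rests on the fact, recorded in the proof of Theorem \ref{TeoCohomologyQuotientStack}, that $G^\bullet\times X_n$ is the nerve of the transformation groupoid $(G\times X_n\rightrightarrows X_n)$ associated to the $G$-action $\sigma_n$, together with the Morita-invariance of sheaf cohomology for differentiable stacks (so the computation is independent of the chosen atlas) and the compatibility of the cartesian sheaf $\mathfrak{F}$ with pullbacks along the face maps, as in Definition \ref{Sheaf}. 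Once this identification is in place, the convergence is a routine application of the spectral sequence of a bisimplicial object, so the real content lies in setting up the triple complex correctly and invoking Remark \ref{ComplexSimplicialCohomology} to ensure the total cohomology is the desired equivariant cohomology.
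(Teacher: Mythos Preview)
Your proposal is correct and follows essentially the same approach as the paper: form the triple complex $\Gamma(G^p\times X_n, K^q)$ from an acyclic resolution, collapse the $(p,q)$-directions into a single index $r$, filter by the $X_\bullet$-degree $n$, and identify the $E_1$-page with $H^r([X_n/G],\mathfrak{F})$ via the transformation groupoid $(G\times X_n\rightrightarrows X_n)$. The only cosmetic difference is that the paper justifies the abutment by passing explicitly through the diagonal simplicial manifold (as in Theorem \ref{TeoCohomologyQuotientStack}) rather than invoking Remark \ref{ComplexSimplicialCohomology}, and you should drop the word ``bounded'' before ``injective resolution'' since convergence here comes from the first-quadrant nature of the filtration, not boundedness of the resolution.
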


\begin{proof}
We start with an acyclic equivariant  resolution $F^q$ of $\mathfrak{F}$. Then we can consider the induced sheaves $F^{p, n, q}$ on the respective slices $G^p\times X_n$. From this we get a triple complex $K^{\bullet, \bullet,\bullet}$ given by $\Gamma(G^p \times X_n, K^{p,n,q})$. Using as before the fact that the fat geometric realisation of the bisimplical smooth manifold can be computed via the fat geometric realisation of the corresponding diagonal simplicial smooth manifold, that is, 
\[\Vert p\mapsto G^p \times X_p \Vert \cong \Vert p \rightarrow \Vert n\rightarrow  G^p \times X_n  \Vert \Vert, \]
we see that the associated double complex given by $\Gamma(G^p \times X_p, K^{p,q})$ calculates the same cohomology groups. 
The resolution is acyclic and computing first with respect to $q$ we get a complex given by $\Gamma(G^p \times X_p, \mathfrak{F})$ and taking cohomology gives $H^\ast_G(\mathcal{M}, \mathfrak{F})$ as we saw in theorem \ref{TeoCohomologyQuotientStack}.
Alternatively, we consider the triple complex $E^{\bullet, \bullet, \bullet}$ defined by $\Gamma(G^p \times X_n, K^{p,q})$ with differential $d_K$ given by the resolution, the differential $\partial_G$ given as the one provided by the simplicial smooth manifold $G^\bullet$ and the third differential given by the simplicial smooth manifold $X_\bullet$. Then we can consider the associated double complex $C^{\bullet, \bullet}$ with $C^{r,n}= \bigoplus_{r=p+q}\Gamma(G^p \times X_n, K^{p,q})$ and filtering this double complex in the standard way we obtain a spectral sequence with
\[ E_0^{r,n} = \bigoplus_{r=p+q}\Gamma(G^p \times X_n, K^{p,q}).\]
Since the cohomology with respect to the index $r$ is the cohomology of the Lie groupoid $(G\times X_n \rightrightarrows X_n)$, we recover the cohomology of the quotient stack $[X_n/G]$ (see also \cite{heinloth2005notes}) and therefore
\[ E_1^{r,n} = H^r([X_n/G], \mathfrak{F}). \]
Finally, the spectral sequence of the double complex $C^{\bullet, \bullet}$ converges to the graded associated of the equivariant cohomology $H^\ast_G(\mathcal{M}, \mathfrak{F})$.
\end{proof}

If we restrict $G$ to be a countable discrete group, we get as a special case of the above spectral sequence a generalisation of a Bott type spectral sequence (see \cite{bott1973chern}) for equivariant cohomology of discrete group actions. A discrete version of the Bott spectral sequence is then obtained in the special case where the differentiable stack $\mathcal{M}$ is just a point $*$ with trivial $G$-action.

\begin{teo}
If $G$ is a countable discrete group, there exists a spectral sequence such that
\[ E_2^{p,r} = H^p(G, H^{r}(\mathcal{M}, \mathfrak{F})) \Rightarrow H^{p+r}_G(\mathcal{M}, \mathfrak{F}). \] 

\end{teo}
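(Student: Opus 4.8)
The plan is to reorganise the same triple complex that underlies the preceding theorem, but to filter it in the opposite order, and then to exploit the discreteness of $G$ to identify the first two pages with group cohomology. Recall from the proof of the preceding theorem that, starting from an acyclic equivariant resolution $K^{\bullet}$ of $\mathfrak{F}$, one obtains the triple complex $\Gamma(G^p\times X_n, K^{q})$ carrying the resolution differential $d_K$, the simplicial differential $\partial_G$ of the nerve $G^\bullet$, and the simplicial differential $\partial_X$ of $X_\bullet$; its total complex computes $H^\ast_G(\MM,\mathfrak{F})$. When $G$ is a countable discrete group, $G^p$ is a $0$-dimensional smooth manifold and $G^p\times X_n=\coprod_{G^p}X_n$, so that sections over a disjoint union split as a product and $\Gamma(G^p\times X_n, K^{q})=\mathrm{Map}\bigl(G^p,\Gamma(X_n, K^{q})\bigr)=C^p\bigl(G,\Gamma(X_n,K^{q})\bigr)$.

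First I would assemble this triple complex into a double complex whose horizontal index is $p$ (with differential $\partial_G$) and whose vertical index is $m=n+q$ (with differential built from $d_K$ and $\partial_X$), and filter it by $p$. Computing vertical cohomology first, for fixed $p$ one evaluates the $(d_K+\partial_X)$-cohomology of $\bigoplus_{n+q=m}C^p(G,\Gamma(X_n,K^{q}))$. Since $G^p$ is discrete, $C^p(G,-)=\mathrm{Map}(G^p,-)$ is a product of copies of the identity functor and hence exact, so it commutes with cohomology; the vertical cohomology is therefore $C^p\bigl(G,H^r(\MM,\mathfrak{F})\bigr)$, where $H^r(\MM,\mathfrak{F})$ is the sheaf cohomology of $\MM$ computed through the atlas $X_\bullet$ as in the previous section. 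This yields $E_1^{p,r}=C^p(G,H^r(\MM,\mathfrak{F}))$.

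Next, the surviving horizontal differential $\partial_G$ descends to the $E_1$-page as the inhomogeneous bar differential of the nerve $G^\bullet$ acting on the cochains $C^p(G,H^r(\MM,\mathfrak{F}))$. The $G$-module structure on $H^r(\MM,\mathfrak{F})$ is the one induced by the simplicial action $\sigma_\bullet$ on $X_\bullet$ from Proposition \ref{ActionInducedSimpManifold}, which is exactly the twisting of the outer horizontal face maps $\partial_0^H$ and $\partial_p^H$ of $Z_{\bullet,\bullet}$ by the action $\sigma$. Consequently $E_2^{p,r}=H^p\bigl(G,H^r(\MM,\mathfrak{F})\bigr)$. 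Because the double complex is merely a regrouping of the triple complex whose total cohomology is $H^\ast_G(\MM,\mathfrak{F})$, this spectral sequence converges to $H^{p+r}_G(\MM,\mathfrak{F})$, giving the claim.

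The main obstacle is not exactness, which is automatic since (possibly countably infinite) products are exact in the category of abelian groups, but rather two identifications that must be made carefully. First, one must justify the splitting $\Gamma(G^p\times X_n,K^{q})=\mathrm{Map}(G^p,\Gamma(X_n,K^{q}))$ and that the restriction to \emph{countable} $G$ is what makes each $G^p$ a genuine (second countable) object of $\Diff$, so that the simplicial-manifold machinery of the preceding sections applies verbatim. Second, one must verify that the induced differential on the $E_1$-page is precisely the standard bar differential computing $H^p(G,-)$ with respect to the correct $G$-module structure on $H^r(\MM,\mathfrak{F})$; this is a direct comparison of the horizontal face maps $\partial_i^H$ of $Z_{\bullet,\bullet}$ with the simplicial structure of $BG$.
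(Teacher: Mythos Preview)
Your proposal is correct and follows essentially the same route as the paper: both reorganise the triple complex $\Gamma(G^p\times X_n,K^q)$ into a double complex with $p$ separated from the combined index $n+q$, use discreteness of $G$ to identify sections as maps $G^p\to\Gamma(X_n,K^q)$, compute the $(d_K+\partial_X)$-cohomology first to obtain $E_1^{p,r}=C^p(G,H^r(\MM,\mathfrak{F}))$, and then take $\partial_G$-cohomology to reach $E_2$. Your treatment is in fact slightly more explicit on two points the paper leaves implicit, namely the exactness of $C^p(G,-)=\mathrm{Map}(G^p,-)$ and the role of the countability hypothesis in keeping $G^p$ inside $\Diff$.
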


\begin{proof}
We consider the sheaf induced by $\mathfrak{F}$ on $\mathcal{M}$ denoted by the same letter $\mathfrak{F}$ and also the induced bisimplicial sheaf $\mathfrak{F}_\bullet$ on the associated bisimplicial smooth manifold $ Z_{\bullet,\bullet} =\{G^p \times X_n\}_{p,n\geq 0}$ as described in the proof of theorem \ref{TeoCohomologyQuotientStack}.   
Consider an acyclic resolution $K^\bullet$ of $\mathfrak{F}$ and the induced sheaves on the bisimplicial smooth manifold. We get a triple complex $F^{\bullet, \bullet, \bullet}$ given by $\Gamma(G^p \times X_n, K^{p,n,q})$. As the fat geometric realisation of the bisimplical smooth manifold is given as (compare \cite{felisattineu2007sim, ebertrandall2019semi})
\[\Vert p\mapsto G^p \times X_p \Vert \cong \Vert p \rightarrow \Vert n\rightarrow  G^p \times X_n  \Vert \Vert, \]
we see that the double complex given by $\Gamma(G^p \times X_p, K^{p,q})$ calculates the same cohomology. The resolution is acyclic and computing first with respect to $q$ we get the complex given by $\Gamma(G^p \times X_p, \mathfrak{F})$ which calculates the equivariant sheaf cohomology $H^\ast_G(\mathcal{M}, \mathfrak{F})$.

We now consider the triple complex $E^{\bullet, \bullet, \bullet}$ with $\Gamma(G^p\times X_n, K^{p,n,q})$, in which each element can be seen as a map $G^p \rightarrow \Gamma(X_n, K^{n,q})$. $G$ is discrete and for each $f(\overline{g},\overline{x})\in \Gamma(G^p\times X_n,K^{p,n,q})$, we can define now a map $\overline{f}(\overline{x})(\overline{g})=f(\overline{g},\overline{x})\in \Gamma (X_n, K^{n,q})$. We are going to consider this triple complex as a double complex with the differentials given by $\delta = d_K + (-1)^n \partial_X$ and $\partial_G$, where $d_K$ is the differential given by the resolution and $\partial_X$ induced by the simplicial structure of $X_\bullet$ and $\partial_G$ is given by the simplicial structure on $G_\bullet$. There also exists an induced action $\phi$ of $G$ on $H^n(\mathcal{M},\mathfrak{F})$ given by
\[ \phi:  G\times H^n(\mathcal{M},\mathfrak{F}) \rightarrow H^n(\mathcal{M},\mathfrak{F})  \]
\[ (g,[f])\mapsto \phi(g,[f])= [f(\overline{g}, g\cdot \overline{x})] \]
where $f\in \Gamma(G^n\times X_n, \mathfrak{F})$. This gives us as before the $E_0$-term of the spectral sequence. Hence, if we apply the differential $\delta$ first, we get therefore
\[E_1^{p,r}=C^p(G, H^r(\mathcal{M},\mathfrak{F}))\]
and if we apply the differential induced by $\partial_G$, we finally obtain for the $E_2$-term of the spectral sequence
\[ E_2^{p,r} = H^p(G, H^r(\mathcal{M},\mathfrak{F}))  \]
as we desired and the spectral sequence converges again to the associated graded of the equivariant cohomology $H^\ast_G(\mathcal{M}, \mathfrak{F})$.
\end{proof}

\begin{exm}
If the differentiable stack $\MM$ is just a smooth manifold $X$ with an action by a discrete group $G$, the above result gives a spectral sequence of the form
\[ E_2^{p,n} = H^p(G, H^{n}(X, \mathfrak{F})) \Rightarrow H^{p+n}([X/G], \mathfrak{F}) \]
which was previously also obtained by Felder-Henriques-Rossi-Zhu (see \cite[A.4]{felder2008gerbe}).
\end{exm}

\subsection{Hypercohomology of quotient stacks and spectral sequences}

Let $G$ be a Lie group and $\mathcal{M}$ a differentiable $G$-stack. Furthermore, let $\mathfrak{F}_0 \rightarrow \mathfrak{F}_1 \rightarrow \cdots \rightarrow \mathfrak{F}_m $ be a complex of cartesian sheaves of abelian groups on $\mathcal{M}/G$ with an atlas given by $X\rightarrow \MM \rightarrow \MM/G$, where $X\rightarrow \MM$ is a $G$-atlas for $\mathcal{M}$. 
We refer again to \cite{heinloth2005notes}, \cite{felder2008gerbe} and \cite{behrend2011differentiable} for sheaves and complexes of sheaves on differentiable stacks and their cohomologies. For any $r$, let $\mathfrak{F}_r$ be the sheaf from the complex on $\mathcal{M}/G$ and $\mathfrak{F}_{r,\bullet}$ be the induced bisimplicial sheaf on the associated bisimplicial smooth manifold $G^\bullet \times X_\bullet$. As above we can relate the equivariant hypercohomology of the nerve with the equivariant hypercohomology of the stack as follows

\begin{teo}
There exists a spectral sequence such that
\[ E_1^{s,n} = H^{s}([X_n/G], \mathfrak{F}_0 \rightarrow \mathfrak{F}_1 \rightarrow\cdots \rightarrow \mathfrak{F}_m) \Rightarrow H^{s+n}_G(\mathcal{M}, \mathfrak{F}_0 \rightarrow \mathfrak{F}_1 \rightarrow \cdots \rightarrow \mathfrak{F}_m). \]
\end{teo}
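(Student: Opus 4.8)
The plan is to repeat, essentially verbatim, the argument used for the sheaf cohomology spectral sequence above, now treating the internal differential of the complex of sheaves $\mathfrak{F}^\bullet = (\mathfrak{F}_0 \to \mathfrak{F}_1 \to \cdots \to \mathfrak{F}_m)$ as an additional grading direction. First I would choose a Cartan--Eilenberg type resolution of $\mathfrak{F}^\bullet$ by equivariant acyclic sheaves, that is, for each $r$ an acyclic resolution $\mathfrak{F}_r \to K_r^{\bullet}$ compatible with the differentials of the complex, so that the associated total complex resolves $\mathfrak{F}^\bullet$ and, by the discussion of hypercohomology of differentiable stacks from an atlas in Section 3, computes $H^\ast(\MM/G, \mathfrak{F}^\bullet)$.

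Next I would induce these sheaves on the bisimplicial smooth manifold $Z_{\bullet,\bullet}$ with $Z_{p,n} = G^p \times X_n$ constructed in the proof of Theorem \ref{TeoCohomologyQuotientStack}, and take global sections. This yields a multicomplex carrying four differentials: the resolution differential $d_K$, the differential $d_{\mathfrak{F}}$ of the complex of sheaves, the horizontal simplicial differential $\partial_G$ coming from $G^\bullet$, and the vertical simplicial differential $\partial_X$ coming from $X_\bullet$. As in the sheaf-theoretic case, the identification of the fat geometric realisation of $Z_{\bullet,\bullet}$ with that of its diagonal lets me pass to the diagonal complex $\Gamma(G^p \times X_p, K^{p,q})$; collapsing the $d_K$ and $d_{\mathfrak{F}}$ directions first recovers the complex $\Gamma(G^\bullet \times X_\bullet, \mathfrak{F}^\bullet)$, whose cohomology is exactly the equivariant hypercohomology $H^\ast_G(\MM, \mathfrak{F}^\bullet)$ --- the intended abutment.

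I would then assemble the multicomplex into a double complex $C^{\bullet,\bullet}$ in which the second index records the $X_\bullet$-simplicial degree $n$, while the first index $s$ bundles together the resolution degree, the complex-of-sheaves degree, and the $G^\bullet$-simplicial degree. Filtering by $n$ in the standard way produces the spectral sequence. Computing cohomology in the $s$-direction at a fixed $n$ amounts to computing the hypercohomology of the transformation groupoid $(G \times X_n \rightrightarrows X_n)$ with coefficients in $\mathfrak{F}^\bullet$, which by the description of hypercohomology for differentiable stacks in Section 3 is precisely the hypercohomology $H^{s}([X_n/G], \mathfrak{F}^\bullet)$ of the quotient stack. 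This gives the claimed $E_1$-term, and since $\mathfrak{F}^\bullet$ is a bounded complex the double complex has the boundedness needed for the spectral sequence to converge to the associated graded of $H^{s+n}_G(\MM, \mathfrak{F}^\bullet)$.

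The main obstacle I anticipate is purely organisational: I must verify that bundling the three differentials $d_K$, $d_{\mathfrak{F}}$ and $\partial_G$ into a single total differential in the $s$-direction genuinely computes the \emph{hyper}cohomology of $[X_n/G]$, and not merely the sheaf cohomology of an individual term, i.e.\ that interchanging ``resolve and run along the complex direction'' with ``pass to the groupoid quotient'' is harmless. This is the hypercohomology analogue of the identification used in the sheaf-cohomology spectral sequence above; it follows from the boundedness of $\mathfrak{F}^\bullet$ together with the acyclicity of the chosen resolution, but care is required to check that the two orders of collapsing the multicomplex agree, so that both the abutment and the $E_1$-term come out exactly as stated.
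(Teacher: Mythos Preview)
Your proposal is correct and follows essentially the same route as the paper: form the quadruple complex $\Gamma(G^p\times X_n, K_r^{p,n,q})$ from acyclic resolutions of the $\mathfrak{F}_r$, identify the abutment via the diagonal of the bisimplicial manifold, then bundle the $p,q,r$ indices into a single degree $s$ and filter by the $X_\bullet$-degree $n$ to obtain the spectral sequence with the stated $E_1$-term. Your extra care about the Cartan--Eilenberg compatibility and about boundedness for convergence only makes the argument cleaner than the paper's version.
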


\begin{proof}
Let $K^{q}_r$ be an acyclic equivariant resolution of $\mathfrak{F}_r$ and denote by $K^{p,n,q}_r$ the sheaves induced on $G^p \times X_n$. Then we have the quadruple complex $C^{\bullet, \bullet, \bullet, \bullet}$ with
\[ C^{p,n,q,r}=\Gamma(G^p \times X_n, K^{p,n,q}_r). \] 
As before we can consider the triple complex $C^{\bullet, \bullet, \bullet}$ given by $C^{p,q,r}= \Gamma(G^p \times X_p, K^{p,q}_r)$  and we see that $K^{p,q}_r$ is acyclic. Therefore, if we compute with respect to the index $q$ we get the double complex $C^{\bullet, \bullet}$ with  $C^{p,r}= \Gamma(G^p \times X_p, \mathfrak{F}^{p}_r)$. Hence the quadruple complex has as cohomology $H^*_G(\mathcal{M},\mathfrak{F}_0 \rightarrow \mathfrak{F}_1 \rightarrow \cdots \rightarrow \mathfrak{F}_m )$.

On the other hand, we can interpret also the quadruple complex as a double complex if we consider the complex given by $ C^{s,n}= \bigoplus_{s=p+q+r}\Gamma(G^p \times X_{n}, K^{p, q}_r) $ with its respective differential and as a second differential the one given on $X_\bullet$. Then we get a spectral sequence with
\[ E_0^{s,n} = \bigoplus_{s=p+q+r}\Gamma(G^p \times X_{n}, K^{p,q}_r)\]
and since the cohomology with respect to the index $s$ is the cohomology of the Lie groupoid $(G\times X_n \rightrightarrows X_n)$, we obtain the cohomology of the quotient stack $[X_n/G]$ (see \cite{behrend2011differentiable} and \cite{heinloth2005notes}). Therefore we have
\[ E_1^{s,n} = H^{s}([X_n/G], \mathfrak{F}_0 \rightarrow \mathfrak{F}_1 \rightarrow\cdots \rightarrow \mathfrak{F}_m) \]
and the spectral sequence of the double complex converges in the standard way to the associated graded of the equivariant hypercohomology
$H^*_G(\mathcal{M}, \mathfrak{F}_0 \rightarrow \mathfrak{F}_1 \cdots \rightarrow \rightarrow\mathfrak{F}_m)$
of the differentiable $G$-stack $\MM$. 
\end{proof}

If we again restrict the discussion above to a countable discrete group, then we obtain as a special case the following spectral sequence

\begin{teo}
If $G$ is a countable discrete group, there exists a spectral sequence such that
\[ E_2^{p,s} = H^p(G,H^s(\mathcal{M}, \mathfrak{F}_0 \rightarrow \mathfrak{F}_1 \rightarrow\cdots \rightarrow \mathfrak{F}_m)) \Rightarrow H^{p+s}_G(\mathcal{M}, \mathfrak{F}_0 \rightarrow \mathfrak{F}_1 \cdots \rightarrow \mathfrak{F}_m). \]
\end{teo}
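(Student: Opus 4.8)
The plan is to mirror the proof of the preceding discrete-group theorem for sheaf cohomology, but to carry along the complex-of-sheaves structure so that the abutment becomes equivariant hypercohomology. First I would choose, for each $r$, an acyclic equivariant resolution $K^{\bullet}_r$ of $\mathfrak{F}_r$ and induce the sheaves $K^{p,n,q}_r$ on the slices $G^p\times X_n$ of the bisimplicial smooth manifold $Z_{\bullet,\bullet}$ with $Z_{p,n}=G^p\times X_n$ introduced in the proof of Theorem \ref{TeoCohomologyQuotientStack}. This yields a quadruple complex $\Gamma(G^p\times X_n, K^{p,n,q}_r)$. Using that the fat geometric realisation of $Z_{\bullet,\bullet}$ agrees with that of its diagonal, I would pass to the triple complex $\Gamma(G^p\times X_p, K^{p,q}_r)$, which computes the same cohomology; since the resolution is acyclic, computing first with respect to $q$ recovers $\Gamma(G^p\times X_p, \mathfrak{F}^p_r)$ and hence the equivariant hypercohomology $H^\ast_G(\mathcal{M}, \mathfrak{F}_0\rightarrow\cdots\rightarrow\mathfrak{F}_m)$. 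This identifies the abutment.

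Next I would exploit the discreteness of $G$: since $G$ is a countable discrete group, each element of $\Gamma(G^p\times X_n, K^{p,n,q}_r)$ can be rewritten as a map $G^p\rightarrow\Gamma(X_n, K^{n,q}_r)$, that is, as a group $p$-cochain with values in the hypercohomology complex of $X_n$. I would then reorganise the quadruple complex as a double complex, with one differential $\delta$ combining the resolution differential $d_K$, the differential of the complex of sheaves, and the simplicial differential $\partial_X$ (with signs chosen so that $\delta^2=0$), and the second differential given by the group differential $\partial_G$. As in the single-sheaf case, the action of $G$ on the atlas $X$ induces an action of $G$ on the hypercohomology $H^s(\mathcal{M}, \mathfrak{F}_0\rightarrow\cdots\rightarrow\mathfrak{F}_m)$, which is well defined precisely because the sheaves $\mathfrak{F}_r$ are cartesian.

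Computing the two pages then follows the same two-step pattern. Applying $\delta$ first collapses the resolution direction (by acyclicity), the $X_\bullet$ direction and the sheaf-complex direction to the hypercohomology of $\mathcal{M}$, yielding
\[ E_1^{p,s} = C^p(G, H^s(\mathcal{M}, \mathfrak{F}_0\rightarrow\cdots\rightarrow\mathfrak{F}_m)), \]
the group cochains valued in the hypercohomology of $\mathcal{M}$. Taking cohomology with respect to $\partial_G$ then gives
\[ E_2^{p,s} = H^p(G, H^s(\mathcal{M}, \mathfrak{F}_0\rightarrow\cdots\rightarrow\mathfrak{F}_m)), \]
and the spectral sequence of the double complex converges in the standard way to the associated graded of $H^{p+s}_G(\mathcal{M}, \mathfrak{F}_0\rightarrow\cdots\rightarrow\mathfrak{F}_m)$.

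The step I expect to be the main obstacle is the sign bookkeeping needed to verify that $\delta$ and $\partial_G$ genuinely equip the reorganised quadruple complex with the structure of a double complex, that is, that each squares to zero and that they anticommute. Here three internal differentials, the resolution differential, the simplicial differential on $X_\bullet$, and the differential of the complex of sheaves, must be amalgamated into the single differential $\delta$ before the discreteness of $G$ can be used to split off $\partial_G$; this is the hypercohomology analogue of the combined differential used in the single-sheaf discrete case. Once this bookkeeping is arranged correctly, the identification of the $E_1$- and $E_2$-terms is formal and parallels the single-sheaf argument.
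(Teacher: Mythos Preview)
Your proposal is correct and follows essentially the same approach as the paper: form the quadruple complex $\Gamma(G^p\times X_n,K^{p,n,q}_r)$, identify the abutment via the diagonal, then use discreteness of $G$ to view cochains as maps $G^p\to\Gamma(X_n,K^q_r)$ and filter by $p$ to obtain the stated $E_1$- and $E_2$-terms. The paper's proof is slightly terser and does not dwell on the sign bookkeeping or the role of the cartesian hypothesis, but the strategy is identical.
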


\begin{proof}

As before, let $K^{q}_r$ be an acyclic equivariant resolution of $\mathfrak{F}_r$ and denote by $K^{p,n,q}_r$ the induced sheaves on $G^p \times X_n$. Then we get a quadruple complex $C^{\bullet, \bullet, \bullet, \bullet}$ given by
\[ C^{p,n,q,r}=\Gamma(G^p \times X_n, K^{p,n,q}_r). \] 
As before we can consider the triple complex $C^{\bullet, \bullet, \bullet}$ with $C^{p,q,r}= \Gamma(G^p \times X_p, K^{p,q}_r)$. If we compute with respect to the resolution we get again a double complex $C^{\bullet, \bullet}$ with $C^{p,r}= \Gamma(G^p \times X_p, \mathfrak{F}^{p}_r)$. Hence the quadruple complex has as cohomology $H^*_G(\mathcal{M},\mathfrak{F}_0 \rightarrow \mathfrak{F}_1 \rightarrow \cdots \rightarrow \mathfrak{F}_m )$.

We can also consider the quadruple complex with $C^{p,n,q,r}$ as a double complex given by the differential for the indices $n, q, r$ and as the second differential the one given by the simplicial structure of $G_\bullet$. Moreover if we consider the elements in $C^{p,n,q,r}$ as maps $G^p \rightarrow \Gamma(X_n, K^q_r)$, we see that for the filtration on the induced double complex we have
\[ E_0^{p,s} = C^p(G, \bigoplus_{n+q+r=s}\Gamma(X_n, K^q_r))\]
\[ E_1^{p,s} = C^p(G, H^s(\mathcal{M}, \mathfrak{F}_0 \rightarrow \mathfrak{F}_1 \rightarrow\cdots \rightarrow \mathfrak{F}_m ))\]
and finally for the $E_2$-term of the spectral sequence we get
\[ E_2^{p, s} = H^p(G, H^s(\mathcal{M}, \mathfrak{F}_0 \rightarrow \mathfrak{F}_1 \rightarrow\cdots \rightarrow \mathfrak{F}_m )). \]
As before, the spectral sequence converges to the associated graded of the equivariant hypercohomology of the differentiable $G$-stack $\MM$. 

\end{proof}

\begin{exm}
If $\MM$ is again a smooth manifold $X$ with an action of a discrete group $G$, the previous spectral sequence generalises the spectral sequence
\[ E_2^{p,n} = H^p(G,H^n(X, \mathfrak{F}_0 \rightarrow \mathfrak{F}_1 \rightarrow\cdots \rightarrow \mathfrak{F}_m)) \Rightarrow H^{p+n}([X/G], \mathfrak{F}_0 \rightarrow \mathfrak{F}_1 \cdots  \rightarrow\mathfrak{F}_m) \]
which was constructed also in \cite[A.7]{felder2008gerbe}.
\end{exm}

\subsection{Spectral sequences for continuous cohomology of Lie group actions}

Let $G$ be a Lie group and $\mathcal{M}$ a differentiable $G$-stack. We then get the following generalised version of Bott's spectral sequence, where $H^*_c(-)$ denotes continuous cohomology (compare \cite{bott1973chern}, \cite{stasheff1978continuous}).

\begin{teo}
There exists a spectral sequence such that 
\[ E_1^{k,p} = \bigoplus_{q+n=k} H ^p_c(G, \bigoplus_{s+t=q} \Omega_{dR}^s(X_n)\otimes S^t(\mathfrak{g}^\vee))\Rightarrow H^{k+p}_{G}(\mathcal{M}, \mathbb{R}) \]
and where
\[ E_2^{k,p} = \text{Tot}\bigoplus_{q+n=k} H ^p_c (G, \bigoplus_{s+t=q} \Omega_{dR}^s(X_n)\otimes S^t(\mathfrak{g^\vee}) ). \]
\end{teo}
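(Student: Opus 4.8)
The plan is to realise this spectral sequence as the spectral sequence of a filtered complex, taking as input the full Getzler multicomplex which was shown above to compute $H^\ast_G(\MM,\R)$. Recall that this complex is
\[ C^\bullet=\bigoplus_{p+2l+m+n=s}C^p\bigl(G,S^{l}(\mathfrak{g}^\vee)\otimes \Omega_{dR}^m(X_n)\bigr), \]
with total differential $\d+\i+(-1)^p(d+\iota)+(-1)^{p+2l+m}\partial_X$, where $\d$ is the smooth group coboundary, $\i$ the Getzler contraction, $d$ the de Rham differential, $\iota$ the Cartan contraction and $\partial_X$ the simplicial differential of $X_\bullet$. The key structural observation is that these five operators shift the four gradings $(p,l,m,n)$ in a controlled way: $\d$ raises only $p$; $\i$ lowers $p$ and raises $l$; $d$ raises $m$; $\iota$ lowers $m$ and raises $l$; and $\partial_X$ raises $n$. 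I would filter $C^\bullet$ by the degree complementary to the group-cochain index, namely by $k=2l+m+n$, forming the decreasing filtration $F^j=\{k\ge j\}$. Since none of the five operators lowers $k$ (we have $\Delta k=0$ for $\d$, $\Delta k=+2$ for $\i$, and $\Delta k=+1$ for each of $d,\iota,\partial_X$), the total differential preserves this filtration and its associated-graded differential is exactly $\d$.

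With this filtration the $E_0$-differential is the part of the total differential preserving $k$, which is $\d$ alone, so $E_1$ is the cohomology of $(C^\bullet,\d)$. By the very definition of $\d$ this is the smooth (continuous) group cohomology of $G$ with coefficients in the Getzler modules, and collecting the fixed internal degree $k=q+n$ yields
\[ E_1^{k,p}=\bigoplus_{q+n=k}H^p_c\Bigl(G,\bigoplus_{s+t=q}\Omega_{dR}^s(X_n)\otimes S^{t}(\mathfrak{g}^\vee)\Bigr). \]
The induced differential $d_1$ is assembled precisely from the operators raising $k$ by one, namely $d$, $\iota$ and $\partial_X$, now acting on continuous-cohomology classes; passing to its cohomology produces the cohomology of the residual de Rham--Cartan--simplicial total complex, which is the ``Tot'' appearing in the stated $E_2^{k,p}$. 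Note that the Getzler contraction $\i$ raises $k$ by two and hence first contributes at the $d_2$-stage, so it affects neither $E_1$ nor $E_2$.

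Convergence to $H^{k+p}_G(\MM,\R)$ then follows from the standard convergence theorem for filtered complexes: in each total degree $N$ one has $p,2l,m,n\ge 0$ with $p+2l+m+n=N$, so $0\le k\le N$ and the filtration is bounded in each total degree; combined with the Getzler-model isomorphism $H^\ast_G(\MM,\R)\cong H^\ast_G(X_\bullet)$ proved above, the abutment is the equivariant cohomology of $\MM$. The \emph{main obstacle} I anticipate is verifying that the associated-graded differential is genuinely $\d$ alone: this is exactly what forces the symmetric generators to be weighted by two in the internal degree $k=2l+m+n$, so that both the Cartan contraction $\iota$ and the Getzler contraction $\i$ strictly raise $k$ and cannot contaminate $E_0$. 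One must then identify $H^\bullet(C^\bullet,\d)$ with continuous group cohomology $H^\bullet_c(G,-)$ of the $G$-modules $S^{t}(\mathfrak{g}^\vee)\otimes\Omega_{dR}^s(X_n)$, where care is needed because $\d$ incorporates the adjoint action on $\mathfrak{g}^\vee$ together with the pullback action on forms. Finally, that $\partial_X$ commutes with the Getzler differential on each slice $X_n$, already recorded above, is what guarantees that the residual $d_1$ is well defined on continuous cohomology and that the sign bookkeeping in the total differential is consistent.
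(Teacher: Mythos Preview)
Your approach is genuinely different from the paper's and, in outline, a reasonable alternative, but there is a real inconsistency in your grading that breaks the argument as written.

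\textbf{What the paper does instead.} The paper does not start from the Getzler multicomplex. It works with the de Rham triple complex $\bigl(\Omega_{dR}^q(G^p\times X_n),\,d_{dR},\,\partial_G,\,\partial_X\bigr)$ which computes $H^\ast_{dR}(\MM/G)$, and then performs the separation of variables
\[
\Psi:\ \Omega_{dR}^q(G^p\times X_n)\ \xrightarrow{\ \cong\ }\ C_\infty\!\Bigl(G^p,\ \bigoplus_{s+t=q}\Omega_{dR}^s(X_n)\otimes \textstyle\bigwedge^{\!t}(\mathfrak{g}^{\vee p})\Bigr),
\]
so that the symmetric algebra does not enter at the outset: one sees \emph{exterior} powers of $\mathfrak{g}^{\vee p}$, and only Bott's classical computation in the $G$-direction converts $\bigwedge^{\!t}(\mathfrak{g}^{\vee p})$ into $S^t(\mathfrak{g}^\vee)$ at the $E_1$-page. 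In particular, in the paper's filtration by $k=q+n$ the index $t$ carries weight~$1$, because it is a \emph{form} degree on $G^p$, not a symmetric-algebra degree.

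\textbf{The gap in your argument.} In the Getzler model the symmetric degree $l$ contributes with weight~$2$ to the total degree, and you correctly observe that to isolate $\bar d$ on $E_0$ you must filter by $k=2l+m+n$: only then do both the Cartan contraction $\iota$ ($\Delta k=+1$) and the Getzler contraction $\bar\iota$ ($\Delta k=+2$) strictly raise $k$. But you then identify the $E_1$-term as
\[
E_1^{k,p}=\bigoplus_{q+n=k}H^p_c\Bigl(G,\ \bigoplus_{s+t=q}\Omega_{dR}^s(X_n)\otimes S^t(\mathfrak{g}^\vee)\Bigr),
\]
which collects $(s,t,n)$ with $s+t+n=k$, i.e.\ with the symmetric degree weighted by~$1$. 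This is not what your filtration produces: your $E_1$ is grouped by $2t+s+n=k$, not $t+s+n=k$, so the displayed identification fails whenever $t>0$. If instead you filter by $k'=l+m+n$ to match the theorem's indexing, then $\iota$ has $\Delta k'=0$ and lands in $E_0$ alongside $\bar d$; the $E_0$-differential is no longer the bare group-cochain differential, and $E_1$ is not continuous group cohomology of the untwisted modules $\Omega_{dR}^s(X_n)\otimes S^t(\mathfrak{g}^\vee)$.

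\textbf{Summary.} Your route via the Getzler model does yield a convergent spectral sequence with $E_0$-differential $\bar d$ and $E_1$ given by smooth (hence continuous) group cohomology, but its $E_1$ is indexed by $2l+m+n$ and therefore is a \emph{different} spectral sequence from the one in the statement. The paper obtains the stated $E_1$ precisely because it begins with $\bigwedge^{\!t}(\mathfrak{g}^{\vee p})$ sitting in form degree~$t$ and invokes Bott's exterior-to-symmetric passage rather than building in $S^\ast(\mathfrak{g}^\vee)$ from the start.
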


\begin{proof}
Let $\Omega_{dR}^{\bullet}(G^{\bullet} \times X_{\bullet})$ be the triple complex given by $(\Omega_{dR}^q(G^p\times X_n), d_{dR}, \partial_G,\partial_X)$ which computes the de Rham cohomology $H^{*}_{dR}(\mathcal{M}/G)$ of the quotient stack. We also consider the complex of smooth functions as discussed in \cite{heller1973principal}, \cite{hu1952cohomology} and \cite{stasheff1978continuous} given by
\[C_\infty (G^p, \bigoplus_{s+t=q} \Omega_{dR}^s(X_n)\otimes \wedge^t(\mathfrak{g}^{\vee p}) )\]
with differentials induced by $d_{dR}$, $\partial_G $ and $\partial_X$. So if we take the function
\[ \Psi: \Omega_{dR}^q(G^p\times X_n) \rightarrow C_\infty (G^p, \bigoplus_{s+t=q} \Omega_{dR}^s(X_n)\otimes \wedge^t(\mathfrak{g}^{\vee p}) )  \]
\[ \sum \omega_i(\vec{g},\vec{x})dg_I dx_J \mapsto \vec{g}\xrightarrow{\omega} (\omega_i(\vec{g},\vec{x}) dx_J \otimes dg_I),\]
which commutes with the differentials and is a bijection, we see that the complex of smooth functions computes the same cohomology as the complex given by $\Omega_{dR}^q(G^p\times X_n)$.

Moreover, if we consider the double complex $C^{\bullet, \bullet}=\{C^{k, p}\}$ given by
\[ C^{k,p} = \bigoplus_{q+n=k} C_\infty (G, \bigoplus_{s+t=q} \Omega_{dR}^s(X_n)\otimes \wedge ^t(\mathfrak{g}^{\vee p}) ), \]
%and we filter by 
%\[ F_r(\text{Tot}C)_{l}=\bigoplus_{k+p=l, k \leq r} C_{k,p} \]
%Then $E_{k,p}^0 = G_k(\text{Tot}C)_{k+p}=C_{k,p}$, 
then we get for the spectral sequence  $E^{k,p}_0 = C^{k,p}$ and therefore
\[E^{k,p}_1 = \bigoplus_{q+n=k } H_\infty^p(G,\bigoplus_{s+t=q} \Omega_{dR}^s(X_n)\otimes H^p (\wedge ^t(\mathfrak{g}^{\vee p})) )\]
\[= \bigoplus_{q+n=k} H_\infty ^p (G, \bigoplus_{s+t=q} \Omega_{dR}^s(X_n)\otimes S^t(\mathfrak{g}^\vee))\]
Here $H_\infty ^*(-)$ means smooth cohomology with respect to smooth cochains. However, these smooth cochains can be approximated by continuous ones (see \cite[5.]{hochschild1962cohomology}, \cite[6.]{stasheff1978continuous}, \cite[I.]{wagemann2015cocycle}). Therefore we can consider this cohomology as the continuous cohomology $H ^*_c(-)$, (compare also \cite{hu1952cohomology}). Hence we get for the second term of the spectral sequence
\[ E^{k,p}_2=\text{Tot} \bigoplus_{q+n=k} H^p_c (G, \bigoplus_{s+t=q} \Omega_{dR}^s(X_n)\otimes S^t(\mathfrak{g}^{\vee }))  \]
and the spectral sequence of the double complex convergences to the equivariant cohomology $H^*_{G}(\mathcal{M}, \mathbb{R})$ of the differentiable $G$-stack $\MM$ as desired.
\end{proof}

\begin{exm}
If the differentiable stack $\MM$ is just a point $*$ with trivial $G$-action, the above spectral sequence reduces to the following spectral sequence converging to the cohomology of the classifying space of the Lie group $G$
\[ E^{t,p}_1= H^p_c(G, S^t(\mathfrak{g}^\vee)) \Rightarrow H^{t+p}(BG, \mathbb{R}) \]
as the homotopy type of the classifying stack $\mathcal{B} G=[*/G]$ is given by the classifying space $BG$ of $G$. This is Bott's spectral sequence as originally constructed by Bott in \cite{bott1973chern} (compare also \cite{stasheff1978continuous}). If $G$ is in addition compact, Bott's spectral collapses and recovers the classical Borel isomorphism $H^*(BG, \mathbb{R})\cong S^*(\mathfrak{g}^\vee)^G$.
\end{exm}

\begin{rem}
In \cite{abadcrainic2013Bottss}, Arias Abad-Crainic extended Bott's spectral sequence to a spectral sequence converging to the cohomology of the classifying space of a general Lie groupoid, which generalises also a similar spectral sequence for the case of flat groupoids constructed previously by Behrend \cite{behrend2005deRham}. Related spectral sequences in the special situation of manifolds with Lie group actions were also developed more recently by Arias Abad-Uribe \cite{ariasabaduribe2015ss} and Garcia-Compean-Paniagua-Uribe \cite{garciacompean2014equi}. These are concerned in particular with actions of non-compact Lie groups on smooth manifolds generalising Bott's original spectral sequence. This corresponds to our situation above for the case where the differentiable stack $\MM$ is a smooth manifold equipped with an action by an arbitrary Lie group $G$.\\
\end{rem}

\noindent {\it Acknowledgements:} The first author was supported by the Colciencias Conv. 646 scholarship of the Colombian government. Both authors like to thank the referee for helpful suggestions, comments and corrections to improve this article.

\end{document}